\documentclass[a4paper, 10pt, oneside, onecolumn]{article}

\RequirePackage{geometry}
\geometry{twoside,
	paperwidth=210mm,
	paperheight=297mm,
	textheight=622pt,
	textwidth=468pt,
	centering,
	headheight=50pt,
	headsep=12pt,
	footskip=40pt,
	footnotesep=24pt plus 2pt minus 12pt,
	columnsep=2pc
}

\usepackage[utf8]{inputenc}
\usepackage[T1]{fontenc}
\usepackage{lmodern}
\usepackage{amsmath}
\usepackage{amssymb}
\usepackage{amsfonts}
\usepackage{amsthm}
\usepackage{xcolor}
\usepackage{hyperref}
\usepackage{cleveref}
\usepackage{mathtools}
\usepackage[autostyle=true]{csquotes}

\hypersetup{
	colorlinks=false,
	pdfborder={0 0 0},
	pdftitle={On the existence of global smooth solutions to the parabolic--elliptic Keller--Segel system with irregular initial data},
	pdfauthor={Frederic Heihoff},
	pdfkeywords={},
	bookmarksopen=true,
}

\renewcommand{\phi}{\varphi}

\newtheorem{base}{Base}[section]
\numberwithin{equation}{section}

\theoremstyle{plain}
\newtheorem{theorem}[base]{Theorem}
\newtheorem{lemma}[base]{Lemma}

\newtheorem{corollary}[base]{Corollary}

\theoremstyle{definition}

\newtheorem{remark}[base]{Remark}

\newcommand{\R}{\mathbb{R}}
\newcommand{\N}{\mathbb{N}}
\renewcommand{\d}{\,\mathrm{d}}
\newcommand{\laplace}{\Delta}
\newcommand{\grad}{\nabla}
\renewcommand{\div}{\nabla \cdot}
\renewcommand{\L}[1]{{L^{#1}(\Omega)}}
\newcommand{\defs}{\coloneqq}
\newcommand{\sfed}{\eqqcolon}
\newcommand{\stext}[1]{\;\;\text{ #1 }\;\;}
\newcommand{\eps}{\varepsilon}
\newcommand{\loc}{\mathrm{loc}}
\newcommand{\tmax}{{T_{\mathrm{max}}}}

\newcommand{\Mp}{\mathcal{M}_+(\overline{\Omega})}
\newcommand{\idata}{\mu}

\newif\ifclarification
\clarificationtrue

\newcommand\numberthis{\addtocounter{equation}{1}\tag{\theequation}}

\makeatletter
\g@addto@macro\bfseries{\boldmath}
\makeatother

\title{On the existence of global smooth solutions to the parabolic--elliptic Keller--Segel system with irregular initial data}
\author{
	Frederic Heihoff\footnote{fheihoff@math.uni-paderborn.de}\\
	{\small Institut f\"ur Mathematik, Universit\"at Paderborn,}\\
	{\small 33098 Paderborn, Germany}
}
\date{}

\begin{document}
	
\maketitle
\begin{abstract}
	\noindent
	We consider the parabolic--elliptic Keller--Segel system 
	\[
		\left\{ 
		\begin{aligned}
		u_t &= \Delta u - \chi \nabla \cdot (u \nabla v), \\
		0 &= \Delta v - v + u
		\end{aligned}
		\right. \tag{$\star$}
	\]
	in a smooth bounded domain $\Omega \subseteq \mathbb{R}^n$, $n\in\mathbb{N}$, with Neumann boundary conditions. We look at both chemotactic attraction ($\chi > 0$) and repulsion ($\chi < 0$) scenarios in two and three dimensions.
	\\[0.5em]
	The key feature of interest for the purposes of this paper is under which conditions said system still admits global classical solutions due to the smoothing properties of the Laplacian even if the initial data is very irregular. Regarding this, we show for initial data $\mu \in \mathcal{M}_+(\overline{\Omega})$ that, if either
	\begin{itemize}
		\item $n = 2$, $\chi < 0$ or
		\item $n = 2$, $\chi > 0$ and the initial mass is small or
		\item $n = 3$, $\chi < 0$ and $\mu = f \in L^p(\Omega)$, $p > 1$  
	\end{itemize}
	holds, it is still possible to construct global classical solutions to ($\star$), which are continuous in $t = 0$ in the vague topology on $M_+(\overline{\Omega})$.
	\\[0.5em]
	\textbf{Keywords:} repulsive and attractive chemotaxis; Keller--Segel; parabolic--elliptic; measure-valued initial data; smooth solution \\
	\textbf{MSC 2010:} 35Q92 (primary); 35J15; 35K55; 35A09; 35B65; 92C17 
\end{abstract}

\section{Introduction}

In this paper, we are primarily concerned with systems of partial differential equations used in the study of biological systems. More specifically, we are interested in systems modeling chemotaxis, the directed movement of cells along a chemical gradient. This use of partial differential equations in the biological study of chemotactic processes was mostly initiated by the seminal work of Keller and Segel in 1970 (cf.\ \cite{keller1970initiation}), in which Keller and Segel used them to model certain slime molds in an effort to understand their aggregation behavior. The popularity of this approach was further bolstered by the subsequent successful mathematical analysis of said model, which confirmed the presence of aggregation in the sense that under appropriate initial conditions the solutions to the system blow up in finite time while retaining their initial mass (cf.\ \cite{NagaiBlowupNonradialSolutions2001}, \cite{WinklerBlowUp}). This success in modeling and mathematical analysis has led to many more biological processes (and sometimes even comparable processes from other fields, such as criminology, cf.\ \cite{ShortCrime}) to be modeled in a similar fashion. For a survey, we refer the reader to \cite{Survey}. 
\\[0.5em]
As already alluded to, there exist many scenarios (generally dependent on the initial data and dimension of the domain), in which solutions to these kinds of chemotaxis models blow up in finite time (cf.\
\cite{HorstmannBoundednessVsBlowup2005}, \cite{JagerExplosionsSolutionsSystem1992}, 
 \cite{NagaiBlowupRadiallySymmetric1995}, \cite{NagaiBlowupNonradialSolutions2001}, \cite{SenbaParabolicSystemChemotaxis2001},
 \cite{WinklerBlowUp}). It is further known that in some of these scenarios this blowup takes the form of the solution converging to approximately a Dirac mass or some function of potentially very little regularity (cf.\ \cite{MR1970697}, \cite{MR3936129}). What we are now interested in is in a sense the opposite scenario, which has to our knowledge been less often discussed thus far. Namely, we consider the case of starting with initial data of very little regularity, such as a Dirac measure, and are then concerned with deriving whether or under which conditions there still exist sensible global classical solutions for such a model, which are still connected to the initial data in a reasonable fashion. This can be interpreted as essentially starting our analysis at the point in time when aggregation occurred and then investigating under which circumstances the model still yields sensible results from that point onward.
\\[0.5em]
The model we want to analyze in this regard will be a variation on the original (here somewhat simplified) Keller--Segel model (cf.\ \cite{keller1970initiation}):
\begin{align*}
\left\{
\begin{aligned}
u_t &= \laplace u - \div (u\grad v), \\
v_t &= \laplace v - v + u
\end{aligned}
\right.
\end{align*}
In this model, the function $u$ represents the density of the organism under consideration while the function $v$ represents the density of the attractant substance. Both are under the influence of diffusion modeled by the terms $\laplace u$ and $\laplace v$. The central term representing the chemotatic interaction is $\div (u \grad v)$. The remaining linear terms in the second equation then model the degradation of the attractant over time and the production of the attractant by the organisms, respectively. 
\\[0.5em]
We then consider the similar system (with the same roles for $u$ and $v$)
\begin{equation} \label{problem}
\left\{ 
\begin{aligned}
u_t &= \laplace u - \chi \div (u \grad v), \\
0 &= \laplace v - v + u,
\end{aligned}
\right.
\end{equation}
in which the second equation is only of elliptic type. This modification of the original Keller--Segel system is in fact not uncommon (cf.\ \cite{JagerExplosionsSolutionsSystem1992}, \cite{NagaiBlowupRadiallySymmetric1995}) and can be understood as reflecting that the chemical responds immediately everywhere to changes in the population density of the modeled organism as opposed to the organism density only influencing its time evolution and therefore having a delayed effect on it. Additionally, we also add a potentially negative coefficient $\chi$ to the chemotaxis term $\div (u \grad v)$, which models that the chemical substance cannot only be an attractant but also possibly a repellent (cf.\ \cite{LucaChemotacticSignalingMicroglia2003}, \cite{MarkPatterningNeuronalConnections1997}, \cite{PiniChemorepulsionAxonsDeveloping1993} for some biological processes involving chemorepulsion).

\paragraph{Main result.}
We consider the system (\ref{problem}) with Neumann boundary conditions 
\begin{equation} 
\left\{	 \label{boundary_conditions}
\begin{aligned}
\grad u(x,t) \cdot \nu &= 0 &&\;\;\;\; \text{ for all } x\in\partial\Omega, t > 0, \\
\grad v(x,t) \cdot \nu &= 0 &&\;\;\;\; \text{ for all } x\in\partial\Omega, t > 0
\end{aligned}
\right.
\end{equation}
in a bounded domain $\Omega \subseteq \R^n$, $n \in \{2,3\}$, with smooth boundary. Concerning the initial data, we assume $\idata$ to be an element of $\Mp$, the set of all positive Radon measures with the vague topology. The vague topology on $\Mp$ is characterized as follows (cf.\ \cite[Definition 30.1]{BauerMeasureAndIntegration}): A sequence $(\mu_n)_{n\in\N} \subseteq \Mp$ converges to $\mu \in \Mp$ in the vague topology if and only if
\[
	\int_{\overline{\Omega}} f \d\mu_n \rightarrow \int_{\overline{\Omega}} f \d\mu \;\;\;\; \text{ as }  n \rightarrow \infty \;\;\;\; \text{ for all } f \in C(\overline{\Omega}).
\]
For the purposes of this paper, whenever necessary we identify nonnegative functions $\phi \in L^1(\Omega)$ with the measure $\phi(x) \d x$, where $\d x$ represents the standard Lebesgue measure.
\\[0.5em]
Under these conditions, we then investigate some scenarios under which the smoothing properties of the Laplacian are sufficient to counteract the irregularity of the initial data and the destabilizing effects of the taxis term and therefore make it possible to still construct smooth solutions that attain the initial data in a sensible way. These scenarios are repulsive chemotaxis in two and three dimensions (with slightly higher regularity needed for the initial data in the three dimensional case) and attractive chemotaxis in two dimensions (with an additional initial mass condition). More precisely, we prove the following 
\begin{theorem} \label{theorem:main}
	Let $n\in\N$, $\chi \in \R$ and let $\Omega \subseteq \R^n$ be a bounded domain with smooth boundary. Then there exists a constant $C_m > 0$ such that the following holds:
	\\[0.5em] 
	Let $\idata \in \Mp$ be some initial data. If
	\begin{align}
		 &n = 2 \stext{ and }  \chi < 0 \label{scenario:2d+repulsion}\tag{S1}\\ 
		\text{ or }\;\;\;\; &n = 2 \stext{ and } \chi > 0 \stext{ and } \mu(\Omega) \leq C_m \label{scenario:2d+attraction}\tag{S2} \\
		\text{ or }\;\;\;\; &n = 3 \stext{ and } \chi < 0 \stext{ and } \mu = f \text{ for some } f \in L^p(\Omega) \text{ with } p > 1 \label{scenario:3d+repulsion}\tag{S3},
	\end{align} 
	then there exist functions
	\[
	\left\{
	\begin{aligned}
	u &\in C^{2,1}(\overline{\Omega}\times(0,\infty)), \\
	v &\in C^{2,0}(\overline{\Omega}\times(0,\infty))
	\end{aligned}
	\right.
	\]
	that solve (\ref{problem}) on $\Omega\times(0,\infty)$ with boundary conditions (\ref{boundary_conditions}) classically and attain the initial data $\mu$ in the following way: 
	\begin{equation*}
		u(\cdot, t) \rightarrow \idata \;\;\;\; \text{ in } \Mp \text{ as } t\searrow 0.
	\end{equation*}
\end{theorem}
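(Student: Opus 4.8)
The plan is an approximation argument. First I would regularise the datum by setting $\mu_\eps\defs e^{\eps\Delta}\mu$, the Neumann heat semigroup applied to $\mu$: for $\eps>0$ this is a nonnegative smooth function on $\overline\Omega$ satisfying the Neumann boundary condition, with $\int_\Omega\mu_\eps\d x=\mu(\overline\Omega)$, and $\mu_\eps\to\mu$ vaguely as $\eps\searrow0$ since $e^{\eps\Delta}f\to f$ uniformly for $f\in C(\overline\Omega)$; in Scenario~\eqref{scenario:3d+repulsion} one moreover has $\|\mu_\eps\|_{L^p(\Omega)}\le\|f\|_{L^p(\Omega)}$. Writing the elliptic equation as $v_\eps=(1-\Delta)^{-1}u_\eps$ turns the first equation into a single, nonlocal, scalar parabolic problem with smooth initial data, so standard local well-posedness and parabolic regularity theory provides for each $\eps$ a maximal classical solution $(u_\eps,v_\eps)$ on $\overline\Omega\times[0,\tmaxeps)$ with $u_\eps\ge0$, $v_\eps\ge0$ and conserved mass $\int_\Omega u_\eps(\cdot,t)\d x=\mu(\overline\Omega)$.

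The heart of the argument is to establish, uniformly in $\eps$, a bound $\|u_\eps(\cdot,t)\|_{L^p(\Omega)}\le C\,t^{-\gamma}$ for $t\in(0,T)$ with a suitable $p>1$, the rate $\gamma=\tfrac n2(1-\tfrac1p)<1$ being the smoothing rate $\|e^{t\Delta}\mu\|_{L^p(\Omega)}\lesssim t^{-\gamma}\mu(\overline\Omega)$ of the heat semigroup, and with $\eps$-independent $C,T>0$. In the repulsive Scenarios~\eqref{scenario:2d+repulsion} and~\eqref{scenario:3d+repulsion} I would obtain this by testing the first equation against $u_\eps^{p-1}$, which after integrating by parts and inserting $\Delta v_\eps=v_\eps-u_\eps$ yields
\[
	\frac1p\,\frac{\d}{\d t}\int_\Omega u_\eps^p+(p-1)\int_\Omega u_\eps^{p-2}|\nabla u_\eps|^2=\frac{\chi(p-1)}{p}\int_\Omega u_\eps^{p+1}-\frac{\chi(p-1)}{p}\int_\Omega u_\eps^p v_\eps;
\]
since $\chi<0$ the first term on the right is a strong superlinear absorption term, the second is controlled through elliptic regularity for $v_\eps$, Gagliardo--Nirenberg and the $\eps$-uniform mass bound, and one is led to a differential inequality $y'\le a(t)-b\,y^{1+1/p}$ for $y(t)\defs\int_\Omega u_\eps^p(\cdot,t)\d x$. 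The decisive point is that this forces $y(t)\lesssim t^{-p}$ for small $t$ \emph{independently of the possibly huge initial value} $y(0)=\int_\Omega\mu_\eps^p\d x$ --- the absorption regularises instantaneously --- after which a bootstrap through the mild formulation $u_\eps(\cdot,t)=e^{t\Delta}\mu_\eps+\chi\int_0^t e^{(t-s)\Delta}\nabla\!\cdot\!(u_\eps\nabla v_\eps)(\cdot,s)\,\d s$, using the bounds for $\nabla e^{t\Delta}$ and the one-derivative elliptic gain $\|\nabla v_\eps\|_{L^b(\Omega)}\lesssim\|u_\eps\|_{L^c(\Omega)}$, sharpens the rate and raises $p$; in three dimensions the critical scaling is exactly what forces $f\in L^p(\Omega)$ with $p>1$, under which moreover $\|u_\eps(\cdot,t)\|_{L^p(\Omega)}$ stays bounded as $t\searrow0$. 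In the attractive Scenario~\eqref{scenario:2d+attraction} there is no favorable sign: here I would instead control the Duhamel term in the weighted norm $\sup_{0<t<T}t^\gamma\|u_\eps(\cdot,t)\|_{L^p(\Omega)}$, where smallness of $\mu(\Omega)$ below the threshold $C_m$ makes the associated fixed-point map contractive, and for global existence use that the free energy $\int_\Omega u_\eps\log u_\eps-\tfrac\chi2\int_\Omega u_\eps v_\eps$ is nonincreasing along the flow, which together with the Moser--Trudinger / logarithmic Hardy--Littlewood--Sobolev inequality on $\overline\Omega$ --- whose critical constant is precisely what fixes $C_m$ --- bounds $\int_\Omega u_\eps\log u_\eps$, hence all higher norms, from any positive time on.

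Granting these $\eps$-uniform bounds (so in particular $\tmaxeps=\infty$), the remainder is standard parabolic theory away from $t=0$: elliptic regularity controls $v_\eps,\nabla v_\eps$, and after a finite bootstrap $L^q$ maximal regularity and Schauder estimates place $u_\eps$ in a bounded subset of $C^{2+\alpha,1+\alpha/2}(\overline\Omega\times[\tau,T])$ for all $0<\tau<T<\infty$. A diagonal extraction yields $\eps_j\searrow0$ with $u_{\eps_j}\to u$ in $C^{2,1}_{\loc}(\overline\Omega\times(0,\infty))$ and $v_{\eps_j}\to v$ in $C^{2,0}_{\loc}(\overline\Omega\times(0,\infty))$, and the limit $(u,v)$ solves \eqref{problem} with \eqref{boundary_conditions} classically on $\Omega\times(0,\infty)$. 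To see $u(\cdot,t)\to\mu$ in $\Mp$, I would test the first equation against $\psi\in C^2(\overline\Omega)$ with $\partial_\nu\psi=0$ on $\partial\Omega$, obtaining
\[
	\int_\Omega u_\eps(\cdot,t)\psi\,\d x-\int_\Omega\mu_\eps\psi\,\d x=\int_0^t\!\int_\Omega u_\eps\,\Delta\psi\,\d x\,\d s+\chi\int_0^t\!\int_\Omega u_\eps\,\nabla v_\eps\cdot\nabla\psi\,\d x\,\d s,
\]
whose right-hand side tends to $0$ as $t\searrow0$ uniformly in $\eps$ by mass conservation together with the time-weighted $L^p$ bounds (this is where $\gamma<1$, respectively the boundedness of $\|u_\eps(\cdot,t)\|_{L^p(\Omega)}$ in the three-dimensional case, enters); passing to the limit $\eps_j\searrow0$, then $t\searrow0$, and using density of such $\psi$ in $C(\overline\Omega)$ gives the asserted vague convergence. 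The main obstacle throughout is the middle step --- extracting a priori estimates that are simultaneously uniform in $\eps$ and valid down to $t=0$ for genuinely irregular data, pinning down precisely how the sign of $\chi$ and the threshold $C_m$ enter, and making the three-dimensional critically scaled case close --- while the surrounding approximation, compactness and regularity arguments are essentially routine.
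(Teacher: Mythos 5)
Your outline agrees with the paper on the overall architecture (regularise the datum, test with $u_\eps^{p-1}$, get an ODE with superlinear absorption whose blow-up at $t=0$ is initial-data independent, Hölder/Schauder bootstrap away from $t=0$, and recover $u(\cdot,t)\to\mu$ by time-integrating $\int_\Omega u_{\eps t}\phi$), but there is a quantitative gap in the heart of the argument. In the repulsive case you extract the superlinear absorption from $\chi\tfrac{p-1}{p}\int_\Omega u_\eps^{p+1}$, which by H\"older gives $y'\le a - b\,y^{1+1/p}$ and hence only $\|u_\eps(\cdot,t)\|_{L^p}\lesssim t^{-1}$ --- a \emph{critical}, non-integrable rate. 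Interpolating with the mass bound, $\|u_\eps(\cdot,s)\|_{L^r}\lesssim s^{-p(r-1)/(r(p-1))}$ for $1\le r\le p$, and one checks this is still strictly worse than the heat rate $s^{-\frac n2(1-\frac1r)}$ whenever $n=2$, or $n=3$ and $p\le3$; consequently the Duhamel term $\int_0^t e^{(t-s)\Delta}\nabla\cdot(u_\eps\nabla v_\eps)\,\d s$ you propose to use for the "sharpening bootstrap" has a time integral that diverges at $s=0$ no matter how the exponents are allocated, so the claimed improvement to $\gamma=\tfrac n2(1-\tfrac1p)<1$ is not achievable from that ODE. The paper instead does not discard the dissipation $(p-1)\int u_\eps^{p-2}|\nabla u_\eps|^2$: via the Gagliardo--Nirenberg inequality $(\int_\Omega u_\eps^p)^{1+\frac{2}{n(p-1)}}\lesssim m^{\cdot}\int_\Omega|\nabla u_\eps^{p/2}|^2+m^{\cdot}$ it produces the ODE $y'\le -Cy^{1+\frac{2}{n(p-1)}}+C$, whose exponent is strictly larger than $1+\tfrac1p$ in the relevant range and whose blow-up $y\lesssim t^{-\frac n2(p-1)}$ is exactly the heat rate, integrable up to $t=0$, with no bootstrap at all. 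You should replace your absorption term by this GNI step; the term $\int u_\eps^{p+1}$ can simply be dropped (it has the good sign).

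Two further differences worth noting. First, the paper additionally regularises the elliptic source as $u_\eps/(1+\eps u_\eps)$, so that global existence of the approximate system is an easy one-page argument; your version keeps the full elliptic equation, which forces you to run the entire a priori machinery just to get $\tmaxeps=\infty$ --- doable, but considerably more delicate than you suggest. Second, your treatment of the attractive case via a weighted-norm fixed point plus the free energy / Moser--Trudinger--HLS route is a genuinely different (and in principle sharper in $C_m$) strategy; the paper handles it much more uniformly, using the same GNI estimate $\int u_\eps^{p+1}\le K_1m\int|\nabla u_\eps^{p/2}|^2+K_1m^{p+1}$ so that smallness of $m$ lets the dissipation absorb the bad-sign $u^{p+1}$ term, after which the repulsive-case ODE goes through verbatim. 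Note also that for measure-valued data the initial free energy $\int\mu\log\mu$ is typically $+\infty$, so your energy argument would need to be started from a positive time, which in turn presupposes the very smoothing estimate you are trying to prove.
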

\paragraph{Prior work.}
To give some context for our existence result, we will now give a brief overview over some notable prior work in this area. 
\\[0.5em]
We begin by reviewing some results concerning smooth initial data as opposed to the irregular initial data considered here. This of course makes the construction of solutions easier and thus, especially in the repulsive case, there are quite strong existence results available. Namely, it can be shown that problems of type (\ref{problem}) with $\chi < 0$ and smooth initial data have global classical solutions in domains of arbitrary dimension, which converge to their steady states at an exponential rate (cf.\ \cite{SemiConductorExist}, \cite{SemiConductorAsymptotic}, \cite{CompetingChemotaxis}). The attractive case is somewhat more complex regarding existence theory as existence here generally depends on properties of the initial data. In two dimensions existence centrally depends on the initial mass, while in higher dimensions existence can only be ensured for much stronger initial data smallness conditions (cf.\ \cite{MR1046835}, \cite{MR1361006}, \cite{MR1887324}, \cite{MR1623326}). This already suggests that the mass condition for the two-dimensional attractive case (S2) is certainly necessary. 
\\[0.5em]
While we are not as concerned with the parabolic-parabolic case, we still want to mention that similar, but maybe not always quite as strong, results are available in this case as well (cf.\ \cite{MR2549326} for existence results in the repulsive case and \cite{HorstmannBoundednessVsBlowup2005}, \cite{MR1610709}, \cite{MR1893940}, \cite{WinklerBlowUp} for discussions of the attractive case to only list a few). We again refer to the survey \cite{Survey} for a broader overview.
\\[0.5em] 
There have also been efforts to analyze chemotaxis systems with two associated elliptic or parabolic equations with one modeling an attractant and the other a repellent with the key result being that existence of solutions is ensured as long are the repellent forces as stronger than their attractive counterparts (cf.\ \cite{CompetingChemotaxis}). We mention this result as it already illustrates how repulsive chemotaxis generally poses less of problem when constructing solutions as opposed to its attractive counterpart, which is in a sense mirrored in our result.
\\[0.5em]
We now transition to some prior work concerning results about systems similar to (\ref{problem}) with irregular initial data. For the two-dimensional whole space case, there are in fact existence results available for a system similar to (\ref{problem}) with measure valued initial data with results e.g.\  based on methods from harmonic analysis (cf.\ \cite{MR3411404}, \cite{MR2483520}). Moreover, a weak solution construction on the torus is presented in \cite{MR1909263}. In \cite{MR4022112}, a system similar to (\ref{problem}) with an added logistic source is discussed under the assumption that the initial data is radially symmetric and has a singularity in $x=0$ but is otherwise fairly regular. As all of these results restrict themselves to very specific settings in an effort to make use of these restrictions to construct solutions, we can generally not translate the methods employed in them to our setting, which is concerned with general bounded domains and fairly general initial data.
\\[0.5em] 
As for the parabolic-parabolic case for the classic Keller--Segel model in bounded domains, it is know that smooth solutions with irregular initial data exist in either the one dimensional case (cf.\ \cite{MR3905266}) or in the two dimensional case with an added logistic source term acting as an additional regularizing factor (cf.\ \cite{LankeitIrregularInitialData}).  
\paragraph{Approach.}
As is often the case when constructing solutions, our basic approach will be looking at approximate solutions $(u_\eps, v_\eps)_{\eps \in (0,1)}$ that solve a certain regularized version of (\ref{problem}), which can easily be seen to admit global classical solutions, and then gaining our desired solutions $(u,v)$ as limits of $(u_\eps, v_\eps)$ as $\eps \searrow 0$. The key regularizations employed by us for this approach are approximating the initial data by smooth functions and replacing the linear growth term $u$ in the second equation by a term that is bounded independent of $u$, but approaches the original linear term as $\eps \searrow 0$. For the exact system, see (\ref{approx_system}).
\\[0.5em]
Our next step then is deriving a priori estimates for the approximate solutions that do not depend on $\eps$. This is made particularly challenging due to the fact that we cannot rely on much initial data regularity, which is normally a key part of most testing or semigroup based approaches. When using testing based methods, this is due to the fact that the a priori information gained using them is often based on deriving ordinary differential equations with at best a linear decay term for some of the norms of the solution components and then using comparison arguments, which still take the norm of the initial data into account. 
\\[0.5em]
As such, our testing approaches are focused on deriving ordinary differential equations for terms of the form $\int_\Omega u_\eps^p$, which have a superlinear decay term, see \Cref{lemma:the_central_ineq} and \Cref{lemma:the_central_ineq_attraction}. It is the arguments used in the proofs of both of these lemmas where most of the restrictions on the allowed dimension $n$ and values of $\chi$, as well as the initial mass restriction in \Cref{theorem:main} originate from (Only the need for higher regularity of the initial data in the three dimensional case stems from a later argument in \Cref{lemma:ugradv_integrability}, which is apparently necessary to ensure that the constructed solutions still attain the initial data in a sensible fashion). As proven in \Cref{lemma:ode_bound}, these ordinary differential equations then allow us to gain uniform (in regards to $\eps$) $\|u_\eps\|_\L{p}$ bounds for all $p \in [1,\infty)$ on $(t_0, \infty)$, $t_0 > 0$, for the approximate solutions, which by standard bootstrap arguments combined with some compact embedding properties of Hölder spaces and standard regularity theory yield sufficiently regular classical solutions $(u,v)$ of (\ref{problem}) and (\ref{boundary_conditions}) along a suitable sequence $\eps_j \searrow 0$. Additionally, the same argument also gives us certain uniform time integrability properties for $\int_\Omega u_\eps^p$ on $(0,1)$, which are then used in \Cref{section:inital_data_regularity} to conclude that $\int_\Omega u_{\eps t} \phi$ has similar uniform time integrability properties. This implies that the approximate solutions are continuous in $t = 0$ regarding the vague topology in a uniform sense. In \Cref{lemma:initial_data_continuity}, we then use this to argue that this continuity therefore survives the limit process and is thus still present in the actual solutions $(u,v)$.

\section{A regularized version of (\ref{problem}) with approximated initial data}

The key to our construction of solutions to the system (\ref{problem}) with irregular initial data will lie in framing said solutions as the limits of approximate solutions to a similar system, which is regularized in two key ways to make the existence of classical solutions much more obvious. The first regularization we employ will be to approximate the measure-valued initial data by smooth functions while the second is replacing the linear growth term $u$ in the second equation of (\ref{problem}) by a term that is always uniformly bounded independent of the value of $u$ but results in the original term after a limit processes.
\\[0.5em]
More precisely, we will use the following approximate system with smooth initial data $u_{0, \eps}$:
\begin{equation}\label{approx_system}
\left\{
	\begin{aligned}
		{u_\eps}_t &= \laplace u_\eps - \chi \div (u_\eps \grad v_\eps) \;\;\;\; &&\text{ on } \Omega\times(0,\infty), \\
		0 &= \laplace v_\eps - v_\eps + \tfrac{u_\eps}{1+\eps u_\eps}\;\;\;\; &&\text{ on } \Omega\times(0,\infty), \\
		\grad u_\eps \cdot \nu &= 0, \;\; \grad v_\eps \cdot \nu = 0\;\;\;\; &&\text{ on } \partial\Omega\times(0,\infty), \\
		u(\cdot, 0) &= u_{0,\eps} \;\;\;\; &&\text{ on } \Omega
	\end{aligned}	
\right.
\end{equation}
For this system, it is fairly straightforward to construct unique global classical solutions by first constructing local solutions by standard contraction mapping methods (cf.\ \cite{LocalExistenceInSimilarSetting}) and then arguing that finite-time blowup is in fact impossible. The latter step is made easier by the fact that $\frac{u_\eps}{1+\eps u_\eps} \leq \frac{1}{\eps}$, which immediately gives us quite strong bounds for the second solution component. 
\\[0.5em]
As both this approach and the employed regularization are  quite standard, we will only give the following existence argument in brief.
\begin{lemma} \label{lemma:approx_existence}
	Let $n \in \N$, $\chi \in \R$ and let $\Omega \subseteq \R^n$ be a bounded domain with smooth boundary. Then for $\eps \in (0,1)$ and nonnegative $u_{0,\eps} \in C^\infty(\overline{\Omega})$, there exist nonnegative functions
	\begin{align*}
		u_\eps &\in C^{2,1}(\overline{\Omega}\times(0,\infty))\cap C^0(\overline{\Omega}\times[0,\infty)), \\
		v_\eps &\in C^{2,1}(\overline{\Omega}\times(0,\infty)),
	\end{align*}
	that are a global classical solution of (\ref{approx_system}) with the following additional mass conservation property:
	\begin{equation} \label{eq:mass_conservation}
		\int_\Omega u_\eps(\cdot, t) = \int_\Omega u_{0,\eps} \;\;\;\; \text{ for all } t > 0.
	\end{equation}
\end{lemma}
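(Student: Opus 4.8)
The plan is to follow the standard two-step scheme for quasilinear parabolic problems: first construct a local-in-time classical solution by a contraction argument, and then exclude finite-time blowup by means of $\eps$-dependent a priori estimates, where the decisive leverage is the uniform bound $\tfrac{u_\eps}{1+\eps u_\eps} \le \tfrac{1}{\eps}$ on the modified production term.

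For local existence I would fix nonnegative $u_{0,\eps} \in C^\infty(\overline{\Omega})$ and, following \cite{LocalExistenceInSimilarSetting}, run a Banach fixed point argument in a space such as $C^0([0,T]; C^0(\overline{\Omega}))$ (or a suitable interpolation/Hölder space) for the map sending $u$ to the solution of the parabolic equation ${u_\eps}_t = \laplace u_\eps - \chi \div(u_\eps \grad v_\eps)$ after inserting the solution $v$ of the elliptic Neumann problem $0 = \laplace v - v + \tfrac{u}{1+\eps u}$. Elliptic regularity controls $v$, and in particular $\grad v$, in terms of $u$, so the Duhamel representation via the Neumann heat semigroup $(e^{t\laplace})_{t\ge 0}$ turns this map into a contraction on a short enough time interval. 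This produces a maximal solution $(u_\eps, v_\eps)$ on $\overline{\Omega}\times[0,\tmaxeps)$ together with the usual extensibility alternative that $\tmaxeps = \infty$ or $\|u_\eps(\cdot,t)\|_{\L{\infty}} \to \infty$ as $t \nearrow \tmaxeps$; parabolic and elliptic Schauder theory then bootstrap this to the claimed regularity $u_\eps \in C^{2,1}(\overline{\Omega}\times(0,\tmaxeps))\cap C^0(\overline{\Omega}\times[0,\tmaxeps))$ and $v_\eps \in C^{2,1}(\overline{\Omega}\times(0,\tmaxeps))$.

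Nonnegativity of $u_\eps$ follows from the parabolic maximum principle applied to the first equation (since $u_{0,\eps} \ge 0$), which forces $\tfrac{u_\eps}{1+\eps u_\eps} \ge 0$ and hence, by the elliptic maximum principle, $v_\eps \ge 0$. Integrating the first equation over $\Omega$ and discarding the divergence term via the Neumann condition yields $\tfrac{d}{dt}\int_\Omega u_\eps = 0$, that is, the mass conservation \eqref{eq:mass_conservation}.

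It remains to show $\tmaxeps = \infty$, which is the only substantive point and is precisely where the regularization pays off. Since $0 \le \tfrac{u_\eps}{1+\eps u_\eps} \le \tfrac{1}{\eps}$ pointwise, elliptic $L^q$ theory applied to the second equation bounds $\|v_\eps(\cdot,t)\|_{W^{2,q}(\Omega)}$ by a constant depending only on $\eps$, $q$ and $\Omega$, uniformly in $t$, and Sobolev embedding with $q > n$ gives a time-independent bound for $\|\grad v_\eps(\cdot,t)\|_{\L{\infty}}$. Feeding this into the variation-of-constants formula
\[
u_\eps(\cdot,t) = e^{t\laplace} u_{0,\eps} - \chi \int_0^t e^{(t-s)\laplace}\,\div\big(u_\eps(\cdot,s)\,\grad v_\eps(\cdot,s)\big)\d s
\]
and using the smoothing estimates for the Neumann heat semigroup (the gradient of $e^{t\laplace}$ acting on $L^p$ costs a factor $t^{-1/2}$, possibly with a gain of integrability) together with the $L^1$ bound from mass conservation, one obtains on any $[0,T]\cap[0,\tmaxeps)$ an estimate of the form $\|u_\eps(\cdot,t)\|_{\L{\infty}} \le C_1(\eps,T) + C_2(\eps,T)\int_0^t (t-s)^{-1/2}\|u_\eps(\cdot,s)\|_{\L{\infty}}\d s$; a singular-kernel Grönwall argument then keeps $\|u_\eps(\cdot,t)\|_{\L{\infty}}$ finite up to $t = \min\{T,\tmaxeps\}$, and since $T$ was arbitrary the extensibility alternative forces $\tmaxeps = \infty$. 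The main obstacle is this last blowup-exclusion step, but even there the difficulty is mild: the cut-off production term removes any feedback of $u_\eps$ into $v_\eps$ beyond the crude $\tfrac{1}{\eps}$ bound, so no delicate coupling estimate is needed, which is exactly why only a brief sketch is called for here.
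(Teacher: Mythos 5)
Your proposal is correct and follows the same overall architecture as the paper's proof: local existence via a contraction argument in the spirit of \cite{LocalExistenceInSimilarSetting} with the extensibility criterion, nonnegativity and mass conservation from maximum principles and integration, a time-uniform $W^{1,\infty}(\Omega)$ bound on $v_\eps$ from elliptic regularity and the cut-off $\tfrac{u_\eps}{1+\eps u_\eps}\le\tfrac{1}{\eps}$, and finally a Duhamel estimate in $L^\infty(\Omega)$ to exclude blowup. The one place where you genuinely diverge is the closing of that last estimate: you keep the integrand linear in $\|u_\eps(\cdot,s)\|_{\L{\infty}}$ and invoke a singular-kernel Gr\"onwall lemma, which yields a $T$-dependent bound --- entirely sufficient to contradict the extensibility criterion on any finite interval. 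The paper instead interpolates $\|u_\eps\|_{\L{q}}\le\|u_\eps\|_{\L{\infty}}^{\alpha}\|u_\eps\|_{\L{1}}^{1-\alpha}$ with $\alpha\in(0,1)$ (here the conserved mass actually enters the estimate, whereas in your version the $L^1$ bound you mention plays no role), takes the supremum over $t\in[0,T]$, and closes via the absorption inequality $M\le C_1+C_2M^{\alpha}$; the payoff is a bound independent of $T$, i.e.\ a global-in-time $L^\infty$ bound, which your route does not deliver but which the lemma as stated does not require. Both arguments are standard and valid here.
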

\begin{proof}
	By an adaption of standard contraction mapping and maximum principle arguments used in similar settings as seen e.g.\ in \cite[Proposition 3.1]{LocalExistenceInSimilarSetting}, we gain a maximal $\tmax \in (0,\infty]$ and nonnegative functions
	\begin{align*}
		u_\eps &\in C^{2,1}(\overline{\Omega}\times(0,\tmax))\cap C^0(\overline{\Omega}\times[0,\tmax)), \\
		v_\eps &\in C^{2,1}(\overline{\Omega}\times(0,\tmax))
	\end{align*}
	that are a classical solution to (\ref{approx_system}) on $[0,\tmax)$ and adhere to (\ref{eq:mass_conservation}) on $[0,\tmax)$. As a consequence of this standard construction, we further know that, if $\tmax<\infty$, then $\limsup_{t\nearrow\tmax} \|u_\eps(\cdot, t)\|_\L{\infty} = \infty$. 
	\\[0.5em]
	We will now briefly sketch why this blowup of the $L^\infty(\Omega)$ norm of $u_\eps$ in finite time is in fact impossible, which is of course sufficient to complete this proof. First and foremost due to the fact that $\frac{u_\eps}{1+\eps u_\eps} \leq \frac{1}{\eps}$, standard elliptic regularity theory (cf.\ \cite[Theorem 19.1]{FriedmanPDE}) applied to the second equation in (\ref{approx_system}) immediately yields a $W^{2,p}(\Omega)$ bound for $v_\eps$ on $[0,\tmax)$ for all $p \in (1,\infty)$, which by the well-known embedding properties of Sobolev spaces in turn translates to a $W^{1,\infty}(\Omega)$ bound for $v_\eps$ on $[0,\tmax)$. With this, we can now use the variation-of-constants representation of $u_\eps$ relative to the semigroup $(e^{t\laplace})_{t > 0}$ in combination with well-known smoothness estimates (cf.\ \cite[Lemma 1.3]{WinklerSemigroupRegularity}) of said semigroup, the maximum principle and the Hölder inequality to gain constants $C, \lambda > 0$, $\alpha \in (0,1)$ and $q \in (n,\infty)$ such that 
	\begin{align*}
	&\|u_\eps(\cdot, t)\|_\L{\infty}  \\
	=& \left\| e^{t\laplace}u_{0,\eps} - \chi \int_0^t e^{(t-s)\laplace} \div \left( u_\eps \grad v_\eps \right) \d s \right\|_\L{\infty} \\
	 \leq& \|u_{0,\eps}\|_\L{\infty} + C|\chi|\int_0^t (1+(t-s)^{-\frac{1}{2} - \frac{n}{2q}}) e^{-\lambda(t-s)} \|u_\eps\|_\L{q} \|\grad v_\eps\|_\L{\infty} \d s \\
	\leq& \|u_{0,\eps}\|_\L{\infty} + C|\chi|\|u_\eps\|_{L^\infty(\Omega\times[0,T])}^\alpha \int_0^t (1+(t-s)^{-\frac{1}{2} - \frac{n}{2q}}) e^{-\lambda(t-s)} \|u_\eps\|^{1-\alpha}_\L{1} \|\grad v_\eps\|_\L{\infty} \d s
	\end{align*}
	for all $T\in[0,\tmax)$ and $t \in [0,T]$. As the remaining integral term above is bounded independent of $t$ by prior arguments, taking the supremum over $t\in[0,T]$ then immediately yields an $L^\infty(\Omega)$ bound for $u_\eps$ on $[0,T]$ for all $T\in [0,\tmax)$, which is in fact independent of $T$. This implies that $\|u_\eps\|_{L^\infty(\Omega\times(0,\tmax))} < \infty$ and therefore completes the proof.
\end{proof}
\noindent Given that we have now established the necessary existence theory for our approximate solutions, let us fix some functions, measures and parameters for the remainder of this paper in a effort to not unnecessarily clutter later results and arguments. 
\\[0.5em]
First, let $n\in\N$, $\chi \in \R$ be fixed and let $\Omega \subseteq \R^n$ always be a bounded domain with smooth boundary. We then fix some initial data $\mu \in \Mp$ and a family $(u_{0,\eps})_{\eps \in (0,1)} \subseteq C^\infty(\overline{\Omega})$ of nonnegative functions with
\begin{equation} \label{approx_mass_conservation}
	\int_\Omega u_{0,\eps} = \idata(\overline{\Omega}) \sfed m
\end{equation}
that approximate $\mu$ in the following way:
\begin{equation} \label{approx_initial_data}
	u_{0,\eps} \rightarrow \idata \;\;\;\; \text{ in } \Mp \text{ as } \eps \searrow 0
\end{equation}
\begin{remark}
	Let us give a brief argument as to how such an approximation of Radon measures can be achieved: 
	It is fairly easy to see that approximating Dirac measures $\delta_x$ by smooth functions in this way is indeed possible given sufficient boundary regularity (e.g.\ by using $f_\eps(y) \defs C(\eps)e^{-\frac{1}{\eps}|x-y|^2}$ with $C(\eps) > 0$ some normalization constant). This implies that the Dirac measures are contained in the closure (relative to the vague topology) of the set $\mathcal{F} \defs \{ \phi \in C^\infty(\overline{\Omega}) \;|\; \int_\Omega \phi = 1, \phi \geq 0 \}$. Further, one can show that the Dirac measures are the extreme points of the convex set of probability measures $\mathcal{M}_1(\overline{\Omega}) \subseteq \Mp$, which is compact in the vague topology. This makes it accessible to the Krein--Millman theorem (cf.\ \cite[Theorem 3.23]{RudinFunctionalAnalysis}) implying that 
	\[
	\mathcal{M}_1(\overline{\Omega}) = \overline{\text{conv}(\{ \delta_x \,|\, x \in \Omega \})} \subseteq \overline{\mathcal{F}} \subseteq \mathcal{M}_1(\overline{\Omega})
	\] 
	and therefore that $\mathcal{M}_1(\overline{\Omega}) = \overline{\mathcal{F}}$ (see also \cite[Corollary 30.5]{BauerMeasureAndIntegration}). As $\Mp$ is metrizable (cf.\ \cite[Theorem 31.5]{BauerMeasureAndIntegration}), this is sufficient to gain our desired approximation after a straightforward scaling argument.
\end{remark}
\noindent 
If $\mu = f$ for some $f \in L^p(\Omega)$, $p > 1$, we further assume that
\begin{equation}\label{eq:better_approx}
	u_{0,\eps} \rightarrow f \;\;\;\; \text{ in } L^p(\Omega)\text{ as } \eps \searrow 0.
\end{equation} 
This additional approximation property can be achieved by standard methods for approximating $L^p(\Omega)$ functions by smooth functions combined with a straightforward normalization argument to ensure (\ref{approx_mass_conservation}).
\\[0.5em]
\noindent
According to \Cref{lemma:approx_existence}, we then fix a nonnegative global classical solution $(u_\eps, v_\eps)$ to (\ref{approx_system}) with initial data $u_{0,\eps}$ for each $\eps \in (0,1)$.

\section{An initial data independent estimate for an ordinary differential equation}

Deriving ordinary differential equations for key norms by testing partial differential equations with carefully chosen functions is often one of the first steps in the process of gaining sufficient a priori information about said partial differential equations. But, if the decay terms in these ordinary differential equations are not sufficiently strong, the estimates gained from them are often still dependent on the initial data. In general, this poses not much of a problem as the initial data is in many cases assumed to be fairly regular, but in our case this means estimates of this type are mostly useless because our set of approximate initial data $(u_{\eps,0})_{\eps \in (0,1)}$ is in general not even bounded in any $L^p(\Omega)$ with $p > 1$. As such, the ordinary differential equations we will derive in this paper for norms of our approximate solutions will need to have decay terms strong enough to allow for initial data (and therefore $\eps$) independent estimates. Note that, by their very nature, these estimates will always break down for $t \searrow 0$, but may still yield certain integrability properties up to zero instead.
\\[0.5em]
For this purpose, we will in this section consider the initial value problem 
\begin{equation}
\left\{
\begin{aligned}
y'(t) &= -A y^\alpha(t) + B, \;\;\;\; t > 0, \\
y(0) &= y_0
\end{aligned}
\right. \label{eq:helpful_ode}
\end{equation}
with $y_0 \geq 0, A > 0, B \geq 0, \alpha > 1$ and prove that the above superlinear decay term is in fact enough to grant us initial data independent estimates for its solution. We will even quantify somewhat how severely these estimates deteriorate for $t \searrow 0$.
\\[0.5em]
While the proof presented here is fairly straightforward, we will still present the argument leading to the following estimate for (\ref{eq:helpful_ode}) in full due to its important role for the central results of this paper.
\begin{lemma} \label{lemma:ode_bound}
	For each $A > 0, B\geq 0$ and $\alpha > 1$, there exists $C \equiv C(A,B,\alpha) > 0$ such that the following holds: The solution $y \in C^0([0,\infty)) \,\cap\, C^1((0,\infty))$ of (\ref{eq:helpful_ode}) with initial data $y_0$ has the property
	\begin{equation*}
		y(t) \leq Ct^\frac{1}{1-\alpha} + C \;\;\;\; \text{ for all } t > 0.
	\end{equation*}
\end{lemma}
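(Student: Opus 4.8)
The plan is to compare $y$ against an explicit supersolution that already exhibits the claimed $t^{1/(1-\alpha)}$ behaviour near zero. Concretely, I would look for a function of the form $z(t) \defs \kappa t^{1/(1-\alpha)} + \ell$, with constants $\kappa, \ell > 0$ to be chosen depending only on $A, B, \alpha$, and check that $z' \geq -A z^\alpha + B$ on $(0,\infty)$ together with $z(0^+) = +\infty \geq y_0$; the usual ODE comparison principle (the right-hand side $f(y) = -Ay^\alpha + B$ is locally Lipschitz in $y$ on $[0,\infty)$, which also gives uniqueness of the solution asserted in the statement) then yields $y \leq z$ on $(0,\infty)$, which is exactly the desired estimate with $C = \max\{\kappa, \ell\}$.

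For the two constants I would split the analysis into a near-zero regime and a far-from-zero regime. Pick $\ell$ first so that the constant part alone dominates the source: choose $\ell \defs (B/A)^{1/\alpha}$ (or anything slightly larger), so that $-A\ell^\alpha + B \leq 0$; since $z \geq \ell$ always, the "$+B$" is then harmless once the decaying part is under control. For $\kappa$, differentiate: $z'(t) = \frac{\kappa}{1-\alpha} t^{\alpha/(1-\alpha)}$, which is negative (as $\alpha > 1$), and I need $\frac{\kappa}{1-\alpha} t^{\alpha/(1-\alpha)} \geq -A z^\alpha + B$. Using $z^\alpha \geq \kappa^\alpha t^{\alpha/(1-\alpha)}$ (dropping the nonnegative $\ell$), it suffices to have $\frac{\kappa}{1-\alpha} t^{\alpha/(1-\alpha)} \geq -A\kappa^\alpha t^{\alpha/(1-\alpha)} + B$; on the interval where $-A\kappa^\alpha t^{\alpha/(1-\alpha)} + B \le 0$ this is automatic, and on the complementary (bounded-$t$) interval one bounds $t^{\alpha/(1-\alpha)}$ below by its value at the endpoint and chooses $\kappa$ large. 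A cleaner route that avoids casework: check the differential inequality separately against each of the two terms, i.e. arrange $\frac12 z' \geq -A z^\alpha$ and $\frac12 z' \geq B$ — wait, the second cannot hold since $z' < 0$; so instead arrange $z' \ge -Az^\alpha$ pointwise and absorb $B$ into the choice of $\ell$ as above, using $-A z^\alpha + B \le -A(z^\alpha - \ell^\alpha) \le -A(\text{decaying part})^\alpha$ via the elementary inequality $(a+b)^\alpha \ge a^\alpha + b^\alpha$ for $a,b \ge 0$, $\alpha \ge 1$. Then the requirement reduces to $\frac{\kappa}{1-\alpha} t^{\alpha/(1-\alpha)} \ge -A \kappa^\alpha t^{\alpha/(1-\alpha)}$, i.e. $A\kappa^{\alpha-1} \ge \frac{1}{\alpha-1}$, which fixes $\kappa \defs \big(\tfrac{1}{A(\alpha-1)}\big)^{1/(\alpha-1)}$ — note this is a genuine equality-type balance, so in fact $z$ with the decaying part alone is an exact solution of $z' = -Az^\alpha$, and adding $\ell$ only helps.

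I would then write the comparison argument carefully: fix $t_0 > 0$ arbitrary, and on $[t_0, \infty)$ note $z(t_0) = \kappa t_0^{1/(1-\alpha)} + \ell$; if $y(t_0) \le z(t_0)$ the comparison principle on $[t_0,\infty)$ gives $y \le z$ there, and letting $t_0 \searrow 0$ (using $z(t_0) \to \infty$, so the hypothesis $y(t_0) \le z(t_0)$ holds for all small $t_0$ by continuity of $y$ at $0$) propagates the bound to all of $(0,\infty)$. Finally set $C \defs \kappa + \ell$, so $y(t) \le \kappa t^{1/(1-\alpha)} + \ell \le C t^{1/(1-\alpha)} + C$.

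The main obstacle, such as it is, is purely bookkeeping: making sure the source term $B$ is genuinely absorbed without spoiling the sign of $z'$, since one cannot dominate a positive constant by a negative derivative. The resolution — shifting by the constant $\ell = (B/A)^{1/\alpha}$ so that $-Az^\alpha + B \le 0$ whenever $z \ge \ell$, and then only needing to beat the decay term — is the one nonobvious point; everything else (local Lipschitz continuity of the right-hand side, standard scalar comparison, the limit $t_0 \searrow 0$) is routine. I do not expect to need any monotonicity or a priori sign information on $y$ beyond $y_0 \ge 0$, since $z$ is a supersolution regardless.
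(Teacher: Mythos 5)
Your proof is correct, and the two key analytic steps coincide with the paper's: absorbing the constant source $B$ by shifting with $\ell = (B/A)^{1/\alpha}$, and exploiting the explicit power-type solution $t \mapsto \text{const}\cdot t^{1/(1-\alpha)}$ of $w' = -Aw^\alpha$. Where you differ is in the packaging of the comparison. The paper does a case split on the size of $y_0$: for $y_0 \leq \ell$ it compares $y$ directly against the constant $\ell$, and for $y_0 > \ell$ it sets $z \coloneqq y - \ell$, shows $z' \leq -Az^\alpha$, and compares $z$ against the finite-data explicit solution of $w' = -Aw^\alpha$, $w(0) = z(0)$, only at the end dropping the $z^{1-\alpha}(0)$ term to make the bound initial-data free. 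You instead build a single explicit supersolution $z(t) = \kappa t^{1/(1-\alpha)} + \ell$ that blows up at $t = 0$, verify $z' \geq -Az^\alpha + B$ globally via the inequality $(a+b)^\alpha \geq a^\alpha + b^\alpha$, and then propagate the comparison from an arbitrary $t_0 > 0$ and let $t_0 \searrow 0$. This avoids the case analysis entirely, at the modest price of an extra limiting step; both give the same constants up to the cosmetic difference between $\max(\kappa, \ell)$ and $\kappa + \ell$. Your argument is sound; the limiting step is justified precisely because $y$ is continuous (hence bounded) near $0$ while $z(t_0)\to\infty$, and the right-hand side $-Ay^\alpha + B$ is locally Lipschitz on $[0,\infty)$ so the one-sided comparison principle on $[t_0,\infty)$ applies.
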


\begin{proof}
	We fix $A > 0, B \geq 0, \alpha > 1$ and initial data $y_0 \geq 0$.
	\\[0.5em]
	That the solution $y$ exists locally and is unique is ensured by the Picard--Lindelöf theorem. Global existence and nonnegativity then follow by comparing with a sufficiently large constant or zero.
	\\[0.5em]
	If $y_0 \leq \left( B/A \right)^\frac{1}{\alpha}$, then by comparison with a constant function of value $\left( B/A \right)^\frac{1}{\alpha}$, we immediately know that
	\begin{equation}\label{eq:small_initial_data}
		y(t) \leq \left( \tfrac{B}{A} \right)^\frac{1}{\alpha} \;\;\;\; \text{ for all } t > 0.
	\end{equation}
	As this is already sufficient for our desired result, we will now focus on the remaining case $y_0 > \left( B/A \right)^\frac{1}{\alpha}$. In this case, we immediately gain
	\begin{equation*}
		y(t) > \left( \tfrac{B}{A} \right)^\frac{1}{\alpha} \;\;\;\; \text{ for all } t > 0
	\end{equation*}
	by essentially the same comparison argument. Given this, we define $z(t) \defs y(t) - \left( B/A \right)^\frac{1}{\alpha} > 0$ for all $t \geq 0$ and then note that
	\begin{equation*}
		z'(t) = y'(t) = -Ay^\alpha(t) + B = -A \left( y(t) - \left( \tfrac{B}{A} \right)^\frac{1}{\alpha} + \left( \tfrac{B}{A} \right)^\frac{1}{\alpha} \right)^\alpha + B \leq -A z^\alpha(t)  \;\;\;\; \text{ for all } t > 0.
	\end{equation*}
	By now comparing $z$ with the explicit solution to the initial value problem $w' = -Aw^\alpha, w(0) = z(0)$, we can conclude that
	\begin{equation*}
		z(t) \leq \left(A(\alpha - 1)t + z^{1-\alpha}(0)\right)^{\frac{1}{1-\alpha}} \leq (A(\alpha-1))^\frac{1}{1-\alpha}\, t^\frac{1}{1-\alpha}  \;\;\;\; \text{ for all }  t > 0
	\end{equation*}
	and therefore that
	\begin{equation}\label{eq:big_initial_data}
	y(t) \leq (A(\alpha-1))^\frac{1}{1-\alpha}\, t^\frac{1}{1-\alpha} + \left(\tfrac{B}{A}\right)^{\frac{1}{\alpha}} \;\;\;\; \text{ for all }  t > 0.
	\end{equation}
	As we have now covered all necessary cases, combining (\ref{eq:small_initial_data}) and (\ref{eq:big_initial_data}) completes the proof with \[
	C \defs \max\left( \, (A(\alpha - 1))^\frac{1}{1-\alpha}, \, \left(B/A\right)^{\frac{1}{\alpha}} \, \right). \qedhere
	\]
\end{proof}

\section{Deriving our central differential inequality for $\int_\Omega u_\eps^p$}

This next section is now devoted to deriving exactly the type of differential inequalities discussed in the previous one for terms of the form $\int_\Omega u_\eps^p$, $p \in (1,\infty)$, with constants independent of $\eps$. The basic approach for this is testing the first equation in (\ref{approx_system}) with $u_\eps^{p-1}$, employing partial integration and applying the second equation in (\ref{approx_system}) to the resulting $\laplace v_\eps$ terms. Due to the change of sign of the terms originating from the second equation depending on the sign of $\chi$ (resulting in different problematic terms), we will need to treat the repulsive case ($\chi < 0$, see \Cref{lemma:the_central_ineq}) and attractive case ($\chi > 0$, see \Cref{lemma:the_central_ineq_attraction}) somewhat separately. Nonetheless, we still manage to achieve essentially the same result for both barring some additional restrictions on the initial data or dimension.
\\[0.5em]
Before we dive into the actual derivation of the central lemmas of this section, we will first establish some preliminary results about the approximate solutions. The first such result is the derivation of some bounds for the second solution component $v_\eps$. For this, we employ a classic result from elliptic regularity theory dealing with $L^1(\Omega)$ source terms (cf.\ \cite{BrezisStraussEllipticL1Regularity}) to the second equation in (\ref{approx_system}) to gain baseline bounds for later interpolation arguments. Further, we also test the same equation with $v_\eps^{r-1}$ to derive some additional estimates, which will prove helpful when applying some other regularity results later on.

\begin{lemma} \label{lemma:starting_point}
	For each $p \in [1, \frac{n}{n-1})$, there exists $C_1 \equiv C_1(p) > 0$  such that
	\[
		\|v_\eps(\cdot,t)\|_{W^{1,p}(\Omega)}\leq C_1
	\]
	for all $t > 0$ and $\eps \in (0,1)$ and therefore, for each $q \in [1,\frac{n}{n-2})$, there exists $C_2 \equiv C_2(q) > 0$ such that
	\[
		\|v_\eps(\cdot,t)\|_\L{q} \leq C_2
	\]
	for all $t > 0$ and $\eps \in (0,1)$. Further,
	\begin{equation}\label{eq:v_eps_leq_u_eps}
		\|v_\eps(\cdot,t)\|_\L{r} \leq \left\|\frac{u_\eps(\cdot,t)}{1+\eps u_\eps(\cdot, t)}\right\|_\L{r} 
	\end{equation}
	for all $r \in [1,\infty]$, $t > 0$ and $\eps \in (0,1)$.
\end{lemma}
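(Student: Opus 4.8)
The plan is to treat the three assertions of \Cref{lemma:starting_point} in turn, in each case applying a standard elliptic regularity result to the equation $-\laplace v_\eps + v_\eps = \frac{u_\eps}{1+\eps u_\eps}$ and then exploiting the uniform mass bound $\|\tfrac{u_\eps}{1+\eps u_\eps}\|_{\L{1}} \le \|u_\eps(\cdot,t)\|_{\L{1}} = m$ coming from \eqref{eq:mass_conservation} and \eqref{approx_mass_conservation}. For the first estimate I would invoke the Brezis--Strauss theory for elliptic problems with $L^1$ right-hand side (cited in the paragraph preceding the lemma), which for the Neumann problem associated with $-\laplace + 1$ gives $v_\eps(\cdot,t) \in W^{1,p}(\Omega)$ for every $p \in [1,\tfrac{n}{n-1})$ together with a bound $\|v_\eps(\cdot,t)\|_{W^{1,p}(\Omega)} \le C_1(p)\,\|\tfrac{u_\eps}{1+\eps u_\eps}\|_{\L{1}} \le C_1(p)\,m$, which is what is claimed since $m$ is fixed once $\mu$ is fixed. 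Here one should be slightly careful that the Brezis--Strauss result is usually stated for the Dirichlet problem; for the Neumann case the same conclusion holds either by a standard adaptation (duality against $W^{1,p'}$ data) or by reducing to the whole-space/periodic Bessel-potential estimate and localizing, and I would just remark that this is classical.

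The second estimate is then immediate from the first by the Sobolev embedding: for $p \in [1,\tfrac{n}{n-1})$ one has $W^{1,p}(\Omega) \hookrightarrow \L{q}$ with $q = \tfrac{np}{n-p}$, and as $p$ ranges over $[1,\tfrac{n}{n-1})$ the exponent $q = \tfrac{np}{n-p}$ ranges over $[\tfrac{n}{n-1},\tfrac{n}{n-2})$ (with the obvious interpretation $\tfrac{n}{n-2}=\infty$ when $n=2$, and with small exponents covered a fortiori since $\Omega$ is bounded); combined with the embedding constant this yields $\|v_\eps(\cdot,t)\|_{\L{q}} \le C_2(q)$ uniformly in $t$ and $\eps$. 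So this part is purely a bookkeeping exercise in tracking the Sobolev exponents.

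For the pointwise-in-$L^r$ inequality \eqref{eq:v_eps_leq_u_eps} I would argue by testing the second equation with $v_\eps^{r-1}$ (for $r \in [1,\infty)$; the case $r=\infty$ then follows by letting $r\to\infty$, or directly from the maximum principle). Writing $g_\eps \defs \tfrac{u_\eps}{1+\eps u_\eps} \ge 0$, multiplying $-\laplace v_\eps + v_\eps = g_\eps$ by $v_\eps^{r-1} \ge 0$ (note $v_\eps \ge 0$ by the maximum principle, since $g_\eps\ge 0$) and integrating over $\Omega$, the boundary term vanishes by the Neumann condition and the diffusion term $(r-1)\int_\Omega v_\eps^{r-2}|\grad v_\eps|^2$ is nonnegative, so we get $\int_\Omega v_\eps^r \le \int_\Omega g_\eps\, v_\eps^{r-1}$; Hölder with exponents $r$ and $\tfrac{r}{r-1}$ gives $\int_\Omega v_\eps^r \le \|g_\eps\|_{\L{r}}\big(\int_\Omega v_\eps^r\big)^{(r-1)/r}$, and dividing through (which is legitimate; if $\int_\Omega v_\eps^r=0$ the inequality is trivial) yields $\|v_\eps(\cdot,t)\|_{\L{r}} \le \|g_\eps(\cdot,t)\|_{\L{r}}$, i.e.\ exactly \eqref{eq:v_eps_leq_u_eps}. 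All manipulations are justified because $v_\eps(\cdot,t)\in C^2(\overline\Omega)$ for each fixed $t>0$.

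The only genuine subtlety — and hence the step I would flag as the main obstacle — is the first one: making sure the Brezis--Strauss-type $L^1 \to W^{1,p}$ estimate is available in the precise form needed for the Neumann problem on a smooth bounded $\Omega$, with a constant that is uniform in $t$ and $\eps$ (the latter being automatic once the bound is stated in terms of $\|g_\eps\|_{\L 1}\le m$). Everything downstream is then either a Sobolev embedding or an elementary testing computation.
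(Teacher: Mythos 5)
Your proposal is correct and follows essentially the same route as the paper: Brezis--Strauss elliptic regularity applied to $-\laplace+1$ for the uniform $W^{1,p}(\Omega)$ bound, Sobolev embedding for the $L^q(\Omega)$ bound, and testing the second equation with $v_\eps^{r-1}$ for the pointwise comparison. The only cosmetic difference is that the paper passes from $\int_\Omega v_\eps^r \leq \int_\Omega \tfrac{u_\eps}{1+\eps u_\eps} v_\eps^{r-1}$ to the conclusion via Young's inequality rather than Hölder-then-divide, which is an equivalent manipulation; and the concern you flag about Brezis--Strauss being stated for Dirichlet data is not actually an issue here, since the cited result (Lemma~23 in that paper) already treats the Neumann problem.
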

\begin{proof}
	Given the mass conservation property (\ref{eq:mass_conservation}), we can apply elliptic regularity theory that can be used with $L^1(\Omega)$ source terms (cf.\ \cite[Lemma 23]{BrezisStraussEllipticL1Regularity}) to the operator $-\laplace + 1$ to gain the desired uniform $W^{1,p}(\Omega)$ bounds for the second solution component for all $p \in [1,\frac{n}{n-1})$. The remaining  $L^q(\Omega)$ bounds then follow from the Sobolev embedding theorem (cf.\ \cite[Theorem 2.72]{EllipticFunctionSpaces}).
	\\[0.5em]
	For $r \in [1,\infty)$, the inequality (\ref{eq:v_eps_leq_u_eps}) is a consequence of testing the second equation in (\ref{approx_system}) with $v_\eps^{r-1}$ and integrating by parts to first gain $\int_\Omega v_\eps^r \leq \int_\Omega \frac{u_\eps}{1+\eps u_\eps} v_\eps^{r-1}$, which implies $\int_\Omega v_\eps^r \leq \int_\Omega ( \frac{u_\eps}{1+\eps u_\eps} )^r$ due to Young's inequality and therefore (\ref{eq:v_eps_leq_u_eps}) for all $t > 0$ and $\eps \in (0,1)$. The case $r = \infty$ then follows by taking the limit $r \nearrow \infty$ in (\ref{eq:v_eps_leq_u_eps}).
\end{proof}\noindent
As our second preliminary result of this section, we now further derive two interpolation inequalities for the first solution component $u_\eps$ based on a slightly extended variant of the Gagliardo--Nirenberg inequality found in \cite{LankheitGNI}, which are used in both the repulsive and attractive case.
\begin{lemma}\label{lemma:u_interpol}
	For each $p \in (1,\infty)$, there exists $C \equiv C(p) > 0$ such that
	\begin{equation}
		\int_\Omega u_\eps^{p + \frac{2}{n}} \leq C m^\frac{2}{n} \int_\Omega |\grad u_\eps^\frac{p}{2}|^2 + C m^{p+\frac{2}{n}}\label{eq:u_interpol_1}
	\end{equation}
	and
	\begin{equation}
		\left( \int_\Omega u_\eps^{p} \right)^{1+\frac{2}{n(p-1)}} \leq C m^\frac{2p}{n(p-1)} \int_\Omega |\grad u_\eps^\frac{p}{2}|^2 + C m^{p + \frac{2p}{n(p-1)}} \label{eq:u_interpol_2}
	\end{equation}
	for all $t > 0$ and $\eps \in (0,1)$ with $m$ as defined in (\ref{approx_mass_conservation}).
\end{lemma}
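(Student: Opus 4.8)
The plan is to reduce both estimates to the Gagliardo--Nirenberg inequality applied to $w \defs u_\eps^{p/2}$. Writing the inequality in the slightly extended form of \cite{LankheitGNI}, which also admits a lower-order norm $\|w\|_{\L{s}}$ with $s < 1$, I would take $s \defs \tfrac{2}{p}$, so that by the mass conservation property (\ref{eq:mass_conservation}) together with (\ref{approx_mass_conservation}) we have $\|w(\cdot,t)\|_{\L{2/p}}^{2/p} = \int_\Omega u_\eps(\cdot,t) = m$, i.e.\ $\|w(\cdot,t)\|_{\L{2/p}} = m^{p/2}$. This term will carry all of the $m$-dependence. We may assume $m > 0$, since for $m = 0$ mass conservation and nonnegativity force $u_\eps \equiv 0$ and both inequalities are trivial; then $u_{0,\eps} \not\equiv 0$, so the strong maximum principle applied to the parabolic equation satisfied by $u_\eps$ (which is linear in $u_\eps$ with continuous coefficients once $v_\eps$ is fixed) gives $u_\eps(\cdot,t) > 0$ on $\overline{\Omega}$ for every $t > 0$, whence $w(\cdot,t) \in C^2(\overline{\Omega}) \subseteq W^{1,2}(\Omega)$ and the Gagliardo--Nirenberg inequality applies for each fixed $t > 0$.

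For (\ref{eq:u_interpol_1}) I would note that $\int_\Omega u_\eps^{p + \frac2n} = \|w\|_{\L{r}}^r$ with $r \defs 2 + \tfrac{4}{np}$, and check admissibility, i.e.\ $\tfrac{2}{p} < r < \tfrac{2n}{n-2}$ when $n \ge 3$ (using $p > 1$) and $r < \infty$ otherwise. Gagliardo--Nirenberg then yields $\|w\|_{\L{r}} \le C\|\grad w\|_{\L{2}}^{\theta}\|w\|_{\L{2/p}}^{1-\theta} + C\|w\|_{\L{2/p}}$, where $\theta \in (0,1)$ is forced by the scaling relation $\tfrac1r = \theta\big(\tfrac12 - \tfrac1n\big) + (1-\theta)\tfrac p2$; a short computation gives $\theta = \tfrac{np}{np+2}$, which is precisely the value making $\theta r = 2$. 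Raising the inequality to the $r$-th power and splitting the sum via $(a+b)^r \le 2^r(a^r + b^r)$ gives $\|w\|_{\L{r}}^r \le C\|\grad w\|_{\L{2}}^2 \|w\|_{\L{2/p}}^{(1-\theta)r} + C\|w\|_{\L{2/p}}^r$ with $(1-\theta)r = \tfrac{4}{np}$, and inserting $\|w\|_{\L{2/p}} = m^{p/2}$ turns the two powers of $m$ into $m^{2/n}$ and $m^{p + 2/n}$, which is exactly (\ref{eq:u_interpol_1}).

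For (\ref{eq:u_interpol_2}) I would instead write $\big(\int_\Omega u_\eps^p\big)^{1 + \frac{2}{n(p-1)}} = \|w\|_{\L{2}}^{2(1 + \frac{2}{n(p-1)})}$ and apply Gagliardo--Nirenberg with target exponent $r = 2$: $\|w\|_{\L{2}} \le C\|\grad w\|_{\L{2}}^{\theta}\|w\|_{\L{2/p}}^{1-\theta} + C\|w\|_{\L{2/p}}$, where now $\tfrac12 = \theta\big(\tfrac12 - \tfrac1n\big) + (1-\theta)\tfrac p2$ forces $\theta = \tfrac{n(p-1)}{n(p-1)+2} \in (0,1)$ --- chosen so that $\theta \cdot 2\big(1 + \tfrac{2}{n(p-1)}\big) = 2$. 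Raising to the power $2\big(1 + \tfrac{2}{n(p-1)}\big)$ and splitting the sum, the power on $\|w\|_{\L{2/p}}$ in the first term works out to $\tfrac{4}{n(p-1)}$ and in the second to $2\big(1 + \tfrac{2}{n(p-1)}\big)$; substituting $\|w\|_{\L{2/p}} = m^{p/2}$ then produces $m^{\frac{2p}{n(p-1)}}$ and $m^{p + \frac{2p}{n(p-1)}}$, which is (\ref{eq:u_interpol_2}). In both cases the resulting constant depends only on $p$ (with the fixed $n$ and $\Omega$ suppressed).

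The only genuine work is the exponent bookkeeping: verifying that the two stated values of $\theta$ satisfy the Gagliardo--Nirenberg compatibility relations and lie in the open interval $(0,1)$, and that the target exponents are admissible (in particular strictly subcritical for $n \ge 3$). The substitution $w = u_\eps^{p/2}$, the $W^{1,2}$-membership via the strong maximum principle, and the splitting of the sum are all routine, so I do not expect any real obstacle here.
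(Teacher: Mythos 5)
Your proposal is correct and follows essentially the same route as the paper: both apply the extended Gagliardo--Nirenberg inequality of \cite[Lemma 2.3]{LankheitGNI} to $u_\eps^{p/2}$ with the lower-order norm taken in $L^{2/p}(\Omega)$, so that mass conservation converts it into the stated powers of $m$, and with the interpolation exponents $\tfrac{np}{np+2}$ and $\tfrac{n(p-1)}{n(p-1)+2}$ chosen exactly so that the gradient term appears squared. Your additional remarks on the $m=0$ case and on $W^{1,2}$-membership of $u_\eps^{p/2}$ via the strong maximum principle are a small extra precaution the paper omits, not a different argument.
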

\begin{proof}
	Fix $p \in (1,\infty)$. As then
	\[
	\frac{1}{2+\frac{4}{np}} + \frac{1}{n} \geq \frac{1}{2+\frac{4}{n}} + \frac{1}{n} = \frac{2 + \frac{4}{n} + n }{2n + 4} \geq\frac{2 + n }{2n + 4} = \frac{1}{2}\;\;\text{ and }\;\; \frac{2}{p} \leq 2 \leq 2 + \frac{4}{np},
	\]
	we can use the Gagliardo--Nirenberg inequality, or rather a variant of it from \cite[Lemma 2.3]{LankheitGNI}, which allows for some of the parameters to be from the interval $(0,1)$ in a way not covered by the original inequality, to gain $K_1 > 0$ such that
	\[
	\|u_\eps^\frac{p}{2}\|_\L{2 + \frac{4}{np}} \leq K_1\| \grad u_\eps^\frac{p}{2}\|^\alpha_\L{2} \|u_\eps^\frac{p}{2}\|^{1-\alpha}_\L{\frac{2}{p}} + K_1\|u_\eps^\frac{p}{2}\|_\L{\frac{2}{p}} 
	\]
	for all $t > 0$ and $\eps \in (0,1)$ with 
	\[
	\alpha = \frac{\frac{p}{2} - \frac{1}{2+\frac{4}{np}}}{\frac{p}{2} + \frac{1}{n} - \frac{1}{2}} = \frac{\;\frac{p + \frac{2}{n} - 1}{2+\frac{4}{np}}\;}{ \frac{1}{2}( p + \frac{2}{n} - 1)} = \frac{2}{2+\frac{4}{np}}.
	\]
	This then implies that 
	\begin{align*}
	\int_\Omega u_\eps^{p+\frac{2}{n}} = \|u_\eps^\frac{p}{2}\|^{2+\frac{4}{np}}_\L{2 + \frac{4}{np}} &\leq K_2 \|\grad u^\frac{p}{2}_\eps\|^2_\L{2} \|u^\frac{p}{2}_\eps\|^{\frac{4}{np}}_\L{\frac{2}{p}} + K_2\|u_\eps^\frac{p}{2}\|^{2+\frac{4}{np}}_\L{\frac{2}{p}} \\
	&= K_2 m^\frac{2}{n} \int_\Omega |\grad u_\eps^\frac{p}{2}|^2 + K_2m^{p+\frac{2}{n}} \numberthis \label{eq:u_interpol_1_proto}
	\end{align*}
	with $K_2 \defs (2K_1)^{2+\frac{4}{np}}$ due to the mass conservation property seen in \Cref{lemma:approx_existence}.
	\\[0.5em]
	Because moreover
	\[
	\frac{1}{n} + \frac{1}{2} \geq \frac{1}{2} \;\;\text{ and }\;\; \frac{2}{p} \leq 2,
	\]
	the same Gagliardo--Nirenberg type inequality from reference \cite{LankheitGNI} is again applicable and gives us $K_3 > 0$ such that
	\[
	\|u_\eps^{\frac{p}{2}}\|_\L{2} \leq K_3 \|\grad u_\eps^\frac{p}{2}\|_\L{2}^\beta \|u_\eps^\frac{p}{2}\|^{1-\beta}_\L{\frac{2}{p}} + K_3 \|u_\eps^\frac{p}{2}\|_\L{\frac{2}{p}}
	\]
	for all $t > 0$ and $\eps \in (0,1)$ with 
	\[
	\beta = \frac{\frac{p}{2} - \frac{1}{2}}{\frac{p}{2} + \frac{1}{n} - \frac{1}{2}} = \frac{1}{1 + \frac{2}{n(p-1)}}.
	\]
	This implies that
	\begin{align*}
	\left( \int_\Omega u_\eps^{p} \right)^{1+\frac{2}{n(p-1)}} = \|u_\eps^\frac{p}{2}\|^{2\left(1+\frac{2}{n(p-1)}\right)}_\L{2}
	&\leq  K_4\| \grad u_\eps^\frac{p}{2}\|^2_\L{2} \|u_\eps^\frac{p}{2}\|^\frac{4}{n(p-1)}_\L{\frac{2}{p}} + K_4\|u_\eps^\frac{p}{2}\|^{2\left(1+\frac{2}{n(p-1)}\right)}_\L{\frac{2}{p}}
	\\
	&= K_4 m^\frac{2p}{n(p-1)} \int_\Omega |\grad u_\eps^\frac{p}{2}|^2 + K_4 m^{p + \frac{2p}{n(p-1)}} \numberthis \label{eq:u_interpol_2_proto}
	\end{align*}
	for all $t > 0$ and $\eps \in (0,1)$ with $K_4 \defs (2K_3)^{2\left(1+\frac{2}{n(p-1)}\right)}$ due to the mass conservation property seen in \Cref{lemma:approx_existence}.
	\\[0.5em] 
	Combining (\ref{eq:u_interpol_1_proto}) and (\ref{eq:u_interpol_2_proto}) then completes the proof.
\end{proof}
\noindent Having established all the necessary preliminaries, we will now begin deriving the core results of this section by first considering the repulsive case ($\chi < 0$).

\begin{lemma} \label{lemma:the_central_ineq}
Let $p \in (1,\infty)$. If $\chi < 0$ and $n \in \{2,3\}$, then there exists $C \equiv C(p) > 0$ such that
\begin{equation}\label{eq:the_central_ineq}
	\frac{\d }{\d t}\int_\Omega u_\eps^p \leq - C\left( \int_\Omega u_\eps^p \right)^{1+\frac{2}{n(p-1)}} + C
\end{equation}
for all $t > 0$ and $\eps \in (0,1)$.
\end{lemma}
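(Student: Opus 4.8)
The plan is to test the first equation in (\ref{approx_system}) with $p u_\eps^{p-1}$ and integrate by parts. This produces
\[
\frac{\d}{\d t}\int_\Omega u_\eps^p = -p(p-1)\int_\Omega u_\eps^{p-2}|\grad u_\eps|^2 + \chi p(p-1)\int_\Omega u_\eps^{p-1}\grad u_\eps\cdot\grad v_\eps,
\]
and the diffusion term rewrites as $-\frac{4(p-1)}{p}\int_\Omega|\grad u_\eps^{p/2}|^2$. For the taxis term I would integrate by parts once more to move the gradient off $u_\eps$, writing $u_\eps^{p-1}\grad u_\eps = \frac1p\grad u_\eps^p$, so that $\chi(p-1)\int_\Omega \grad u_\eps^p\cdot\grad v_\eps = -\chi(p-1)\int_\Omega u_\eps^p\laplace v_\eps$, and then substitute $\laplace v_\eps = v_\eps - \frac{u_\eps}{1+\eps u_\eps}$ from the second equation. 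This yields
\[
\frac{\d}{\d t}\int_\Omega u_\eps^p \leq -\frac{4(p-1)}{p}\int_\Omega|\grad u_\eps^{p/2}|^2 - \chi(p-1)\int_\Omega u_\eps^p v_\eps + \chi(p-1)\int_\Omega \frac{u_\eps^{p+1}}{1+\eps u_\eps}.
\]
Here is where the sign of $\chi$ matters: since $\chi < 0$, the term $-\chi(p-1)\int_\Omega u_\eps^p v_\eps$ is \emph{nonnegative} (bad, but controllable by \Cref{lemma:starting_point}), while $\chi(p-1)\int_\Omega \frac{u_\eps^{p+1}}{1+\eps u_\eps} \leq 0$ — this is a \emph{good} term providing additional absorption, which is the structural reason the repulsive case is easier. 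I would simply discard the last term (or keep it if needed) and focus on estimating $-\chi(p-1)\int_\Omega u_\eps^p v_\eps = |\chi|(p-1)\int_\Omega u_\eps^p v_\eps$.

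The core estimate is thus bounding $\int_\Omega u_\eps^p v_\eps$. I would apply Hölder's inequality to split this as $\|u_\eps^p\|_{L^{s}}\|v_\eps\|_{L^{s'}}$ for a suitable conjugate pair, or more robustly estimate $\int_\Omega u_\eps^p v_\eps \leq \|v_\eps\|_{L^\infty}\int_\Omega u_\eps^p$ using the $L^\infty$ bound $\|v_\eps\|_{L^\infty}\leq\|\frac{u_\eps}{1+\eps u_\eps}\|_{L^\infty}\leq\frac1\eps$ — but this is $\eps$-dependent and useless. The correct route is Hölder with a finite exponent: choose $q\in[1,\frac{n}{n-2})$ (for $n=3$ this is $[1,3)$, for $n=2$ any $q<\infty$) so that $\|v_\eps\|_{L^q}\leq C_2$ by \Cref{lemma:starting_point}, and then $\int_\Omega u_\eps^p v_\eps \leq \|v_\eps\|_{L^q}\|u_\eps^p\|_{L^{q'}} = C_2\big(\int_\Omega u_\eps^{pq'}\big)^{1/q'}$ with $q' = \frac{q}{q-1}$. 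I then need $\int_\Omega u_\eps^{pq'}$, i.e.\ an $L^{pq'}$-norm of $u_\eps$, to be absorbable into $\int_\Omega|\grad u_\eps^{p/2}|^2$. This is exactly what (\ref{eq:u_interpol_1}) in \Cref{lemma:u_interpol} provides, with $p+\frac2n$ on the left: the requirement is $pq' \leq p + \frac2n$, i.e.\ $q' \leq 1 + \frac{2}{np}$, i.e.\ $q \geq \frac{np+2}{2}$. One must check this is compatible with $q < \frac{n}{n-2}$; for $n=2$ there is no upper constraint on $q$ so it always works, and for $n=3$ one needs $\frac{3p+2}{2} < 3$, i.e.\ $p < \frac43$ — which would be too restrictive, so instead I would interpolate: use Young's inequality to write $\big(\int_\Omega u_\eps^{pq'}\big)^{1/q'}\leq \delta\int_\Omega u_\eps^{p+2/n} + C(\delta)$ whenever $pq' < p + \frac2n$ via Hölder/Young between $L^1$ (mass $m$, controlled) and $L^{p+2/n}$, valid precisely when $pq' \leq p+\frac2n$; for the three-dimensional case one takes $q$ close enough to $\frac{3}{1}=3$ — actually $q\in(\frac{3p+2}{2}, 3)$ requires $\frac{3p+2}{2}<3$. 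The cleaner fix, which I expect the author uses, is to instead Hölder-split as $\int_\Omega u_\eps^p v_\eps \leq \big(\int_\Omega u_\eps^{p+2/n}\big)^{\theta}\big(\int_\Omega u_\eps^{a}\big)^{1-\theta}\|v_\eps\|_{L^b}$ with $a$ small (down to $1$, using mass conservation) and $b<\frac{n}{n-2}$, choosing $\theta<1$ so that after Young's inequality the $\int_\Omega u_\eps^{p+2/n}$ term appears with an arbitrarily small coefficient.

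Putting it together: from (\ref{eq:u_interpol_1}), $\delta m^{-2/n}\int_\Omega u_\eps^{p+2/n} \geq \delta\int_\Omega|\grad u_\eps^{p/2}|^2 - \delta m^{p}$, so after choosing $\delta$ small relative to $\frac{4(p-1)}{p}$ the taxis contribution is absorbed into the diffusion term up to an additive constant, leaving
\[
\frac{\d}{\d t}\int_\Omega u_\eps^p \leq -c\int_\Omega|\grad u_\eps^{p/2}|^2 + C.
\]
Finally I invoke the second interpolation inequality (\ref{eq:u_interpol_2}) from \Cref{lemma:u_interpol}, which bounds $\big(\int_\Omega u_\eps^p\big)^{1+\frac{2}{n(p-1)}}$ by $Cm^{\frac{2p}{n(p-1)}}\int_\Omega|\grad u_\eps^{p/2}|^2 + Cm^{p+\frac{2p}{n(p-1)}}$; rearranging gives $-c\int_\Omega|\grad u_\eps^{p/2}|^2 \leq -c'\big(\int_\Omega u_\eps^p\big)^{1+\frac{2}{n(p-1)}} + C$, which yields (\ref{eq:the_central_ineq}). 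Throughout, every constant depends only on $p$, $n$, $\chi$, $\Omega$, and $m$ — and $m$ is fixed by (\ref{approx_mass_conservation}) — hence the uniformity in $\eps$. \textbf{The main obstacle} is the bookkeeping of exponents in the Hölder/Gagliardo--Nirenberg interpolation of $\int_\Omega u_\eps^p v_\eps$: one must verify that for \emph{every} $p\in(1,\infty)$ and both $n=2,3$ there is an admissible choice of Hölder exponent $b<\frac{n}{n-2}$ making the $u_\eps$-part land strictly below the critical exponent $p+\frac2n$ so that Young's inequality with small coefficient applies — this is exactly the point where the dimensional restriction $n\leq 3$ and the sign condition $\chi<0$ (which kills the otherwise-problematic $\int_\Omega \frac{u_\eps^{p+1}}{1+\eps u_\eps}$ term) both enter.
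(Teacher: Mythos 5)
Your overall architecture (testing with $u_\eps^{p-1}$, integrating by parts twice, substituting $\laplace v_\eps = v_\eps - \tfrac{u_\eps}{1+\eps u_\eps}$, discarding the sign-favourable term $\chi(p-1)\int_\Omega \tfrac{u_\eps^{p+1}}{1+\eps u_\eps}$, absorbing $\int_\Omega u_\eps^{p+2/n}$ into the dissipation via (\ref{eq:u_interpol_1}), and converting the dissipation into the superlinear decay via (\ref{eq:u_interpol_2})) is exactly the paper's. The gap is in your treatment of $\int_\Omega u_\eps^p v_\eps$ for $n=3$. You correctly notice that the two-way H\"older split $\|u_\eps^p\|_{L^{q'}}\|v_\eps\|_{L^{q}}$ with $q<\tfrac{n}{n-2}$ forces $p<\tfrac43$ in three dimensions, but the three-way split you then offer as the ``cleaner fix'' is subject to the identical obstruction: the only a priori control on $u_\eps$ is between $L^1$ (mass) and $L^{p+2/n}$ (dissipation), so every $u_\eps$-factor $u_\eps^{p_i}$ placed in $L^{\lambda_i}$ costs H\"older weight $\tfrac{1}{\lambda_i}\geq\tfrac{p_i}{p+2/n}$; summing, the $u_\eps$-part consumes at least $\tfrac{np}{np+2}$ of the budget, leaving $\tfrac1b\leq\tfrac{2}{np+2}$, i.e.\ $b\geq\tfrac{np+2}{2}$, which is compatible with $b<\tfrac{n}{n-2}=3$ only when $p<\tfrac43$. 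Hence for $n=3$ and $p\geq\tfrac43$ (and the paper later needs $p=8$) your absorption step does not close, and the claim that the $v_\eps$-contribution reduces to an additive constant is unavailable, since $\|v_\eps\|_{L^{1+np/2}}$ is not uniformly bounded by \Cref{lemma:starting_point} in that range.

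The paper's resolution uses an ingredient you did not invoke: after the pointwise Young splitting $u_\eps^p v_\eps\leq\delta u_\eps^{p+2/n}+C(\delta)v_\eps^{1+np/2}$, it estimates $\int_\Omega v_\eps^{1+np/2}$ by returning to the elliptic equation, namely $\|v_\eps\|_{W^{2,p}(\Omega)}\leq C\|\tfrac{u_\eps}{1+\eps u_\eps}\|_{L^p(\Omega)}\leq C\|u_\eps\|_{L^p(\Omega)}$, and interpolating (Gagliardo--Nirenberg) between $W^{2,p}(\Omega)$ and the uniformly bounded $L^r(\Omega)$ norm with $r<\tfrac{n}{n-2}$. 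This yields $\int_\Omega v_\eps^{1+np/2}\leq C\left(\int_\Omega u_\eps^p\right)^{1/2}$, and since $\tfrac12<1+\tfrac{2}{n(p-1)}$ this is absorbed by Young into the superlinear decay term $-\left(\int_\Omega u_\eps^p\right)^{1+\frac{2}{n(p-1)}}$ rather than into the gradient term. Paying for the high integrability of $v_\eps$ with a sublinear power of $\int_\Omega u_\eps^p$ via elliptic regularity is the missing idea; without it your argument proves the lemma only for $n=2$ (where your route is fine, since $\tfrac{n}{n-2}=\infty$) and for $n=3$ with $p<\tfrac43$.
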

\begin{proof}
	Let $p \in (1,\infty)$. Then testing the first equation in (\ref{approx_system}) with $u_\eps^{p-1}$ and integrating by parts yields
	\begin{align*}
	\frac{1}{p(p-1)} \frac{\d}{\d t} \int_\Omega u_\eps^p =& -\int_\Omega |\grad u_\eps|^2 u_\eps^{p-2} + \chi\int_\Omega (\grad u_\eps \cdot \grad v_\eps) \,u_\eps^{p-1}  \\
	=& -\frac{4}{p^2}\int_\Omega |\grad u_\eps^\frac{p}{2}|^2 - \frac{|\chi|}{p}\int_\Omega \grad u^{p}_\eps \cdot \grad v_\eps \\
	=&-\frac{4}{p^2}\int_\Omega |\grad u_\eps^\frac{p}{2}|^2 + \frac{|\chi|}{p}\int_\Omega u^p_\eps \laplace v_\eps \\
	=&-\frac{4}{p^2}\int_\Omega |\grad u_\eps^\frac{p}{2}|^2 + \frac{|\chi|}{p}\int_\Omega u^p_\eps v_\eps - \frac{|\chi|}{p} \int_\Omega \frac{u^{p+1}_\eps}{1+\eps u_\eps} \\
	\leq&-\frac{4}{p^2}\int_\Omega |\grad u_\eps^\frac{p}{2}|^2 + \frac{|\chi|}{p}\int_\Omega u^p_\eps v_\eps \numberthis \label{eq:u_test}
	\end{align*}
	for all $t > 0$ and $\eps \in (0,1)$. Using the interpolation property (\ref{eq:u_interpol_1}) from \Cref{lemma:u_interpol}, we can now fix $K_1 > 0$ such that
    \begin{align*}
		\int_\Omega u_\eps^{p+\frac{2}{n}} \leq K_1 m^\frac{2}{n} \int_\Omega |\grad u_\eps^\frac{p}{2}|^2 + K_1 m^{p+\frac{2}{n}} \leq K_2 \int_\Omega |\grad u_\eps^\frac{p}{2}|^2 + K_2
	\end{align*}
	for all $t > 0$ and $\eps \in (0,1)$ with $K_2 \defs  m^\frac{2}{n} \max(1, m^{p})K_1$ and $m$ as defined in (\ref{approx_mass_conservation}). If we now apply this combined with Young's inequality to (\ref{eq:u_test}), we further see that
	\begin{align*}
		\frac{1}{p(p-1)} \frac{\d}{\d t} \int_\Omega u_\eps^p 
		\leq& -\frac{4}{p^2}\int_\Omega |\grad u_\eps^\frac{p}{2}|^2 + \frac{2}{p^2} \frac{1}{K_2} \int_\Omega u_\eps^{p+\frac{2}{n}} + K_3 \int_\Omega v_\eps^{1+\frac{np}{2}} \\ \leq& -\frac{2}{p^2} \int_\Omega |\grad u_\eps^\frac{p}{2}|^2 + K_3 \int_\Omega v_\eps^{1+\frac{np}{2}} + \frac{2}{p^2} \numberthis \label{eq:u_test_2}
	\end{align*}
	for all $t > 0$ and $\eps \in (0,1)$ with $K_3 \defs \frac{|\chi|}{p}(\frac{2}{|\chi| p K_2})^{-\frac{np}{2}}$.
	\\[0.5em]
	The interpolation property (\ref{eq:u_interpol_2}) from \Cref{lemma:u_interpol} then further gives us $K_4 > 0$ such that
	\begin{align*}
	\left( \int_\Omega u_\eps^{p} \right)^{1+\frac{2}{n(p-1)}} 
	&\leq  K_4m^\frac{2p}{n(p-1)}\int_\Omega |\grad u_\eps^\frac{p}{2}|^2 + K_4 m^{p + \frac{2p}{n(p-1)}}
	\\
	&\leq K_5 \int_\Omega |\grad u_\eps^\frac{p}{2}|^2 + K_5
	\end{align*}
	for all $t > 0$ and $\eps \in (0,1)$ with $K_5 \defs m^\frac{2p}{n(p-1)} \max(1, m^{p}) K_4$ and $m$ as defined in (\ref{approx_mass_conservation}). Applying this to (\ref{eq:u_test_2}) then yields
	\begin{equation}\label{eq:u_test_3}
		\frac{\d}{\d t} \int_\Omega u_\eps^p \leq -K_6 \left( \int_\Omega u_\eps^p \right)^{^{1+\frac{2}{n(p-1)}}} + K_7\int_\Omega v^{1+\frac{np}{2}}_\eps + K_7
	\end{equation}
	for all $t > 0$ and $\eps \in (0,1)$ with $K_6 \defs \frac{2}{K_5}\frac{p-1}{p}$ and $K_7 \defs \max (\,p(p-1)K_3\,,\, 4\frac{p-1}{p} \,)$ after some slight rearrangement.
	\\[0.5em]
	By now applying the Gagliardo--Nirenberg interpolation inequality (cf.\ \cite[Theorem 10.1]{FriedmanPDE}) and a standard elliptic regularity result (cf.\ \cite[Theorem 19.1]{FriedmanPDE}) combined with (\ref{eq:v_eps_leq_u_eps}), we gain $K_{8} > 0$ and $K_{9} > 0$ such that
	\begin{equation} \label{eq:v_eps_interpolation}
		\| v_\eps \|_\L{1+\frac{np}{2}} \leq K_{8} \|v_\eps\|^\alpha_{W^{2,p}(\Omega)} \|v_\eps\|^{1-\alpha}_\L{r}  \leq K_{9} \left\|\frac{u_\eps}{1+\eps u_\eps} \right\|^\alpha_\L{p} \|v_\eps\|^{1-\alpha}_\L{r} \leq K_{9} \left\|u_\eps \right\|^\alpha_\L{p} \|v_\eps\|^{1-\alpha}_\L{r} 
	\end{equation}
	for all $t > 0$ and $\eps \in (0,1)$ with 
	\[
		r \defs \frac{n(n - 1) + 2\frac{n}{p}}{2 + \frac{n}{p}}
	\]
	and
	\begin{align*}
		\alpha &= \frac{\frac{1}{r} - \frac{1}{1+\frac{np}{2}}}{ \frac{1}{r} +  \frac{2}{n} - \frac{1}{p}} 
		= \frac{\frac{2+\frac{n}{p}}{n(n - 1) + 2\frac{n}{p}} - \frac{1}{1+\frac{np}{2}}}{ \frac{2+\frac{n}{p}}{n(n - 1) + 2\frac{n}{p}} + \frac{2}{n} - \frac{1}{p} } 
		= \frac{\;\frac{(2+\frac{n}{p})(1+\frac{np}{2}) - n(n-1) - 2\frac{n}{p}}{1+\frac{np}{2}}\;}{ 2 + \frac{n}{p} + 2(n-1) + \frac{4}{p} - \frac{n(n-1)}{p} - 2\frac{n}{p^2}}\\
		&= \frac{1}{1+\frac{np}{2}} \left(\frac{np + n + 2 -\frac{n^2}{2} - \frac{n}{p}}{ 2n + 2\frac{n}{p} + \frac{4}{p} - \frac{n^2}{p} - 2\frac{n}{p^2}}\right) 
		= \frac{\frac{p}{2}}{1+\frac{np}{2}} \in (0,1).
	\end{align*}
	As 
	\[
		r = \frac{n(n-1) + 2\frac{n}{p}}{2+\frac{n}{p}} \geq \frac{2 + 2\frac{n}{p}}{2+\frac{n}{p}} > 1 \;\; \text{ and }  \;\; r = \frac{n(n-1) + 2\frac{n}{p}}{2+\frac{n}{p}} \leq \frac{6 + 2\frac{n}{p}}{2 + \frac{n}{p}} < 3 \leq \frac{n}{n-2}
	\]
	due to $n \in \{2,3\}$, we can now apply \Cref{lemma:starting_point} to (\ref{eq:v_eps_interpolation}), which gives us a constant $K_{10} > 0$ such that
	\begin{equation} \label{eq:v_eps_interpolation_consequence}
		\int_\Omega v_\eps^{1+\frac{np}{2}} = \| v_\eps \|^{1+\frac{np}{2}}_\L{1+\frac{np}{2}} \leq K_{9}^{1+\frac{np}{2}}K_{10}^{1+\frac{p(n-1)}{2}} \|u_\eps\|^\frac{p}{2}_\L{p} = K_{11} \left(\int_\Omega u_\eps^p\right)^\frac{1}{2}
	\end{equation}
	for all $t > 0$ and $\eps \in (0,1)$. Here, $K_{11} \defs K_{9}^{1+\frac{np}{2}}K_{10}^{1+\frac{p(n-1)}{2}}$. By Young's inequality this implies
	\[
		\int_\Omega v_\eps^{1+\frac{np}{2}} \leq \frac{K_6}{2K_7} \left(\int_\Omega u^p_\eps\right)^{1+\frac{2}{n(p-1)}} + K_{12}
	\]
	for all $t > 0$ and $\eps \in (0,1)$ with $K_{12} \defs K_{11} \left(\frac{K_6}{2 K_7 K_{11}}\right)^{-\frac{n(p-1)}{4+n(p-1)}}$as $\frac{1}{2} < 1 \leq 1 + \frac{2}{n(p-1)}$.
	\\[0.5em]
	As our final step, we now apply this result to (\ref{eq:u_test_3}) to see that
	\[
		\frac{\d}{\d t} \int_\Omega u_\eps^p \leq -\frac{K_6}{2} \left( \int_\Omega u_\eps^p \right)^{^{1+\frac{2}{n(p-1)}}} + K_7 + K_7 K_{12}.
	\]
	for all $t > 0$ and $\eps \in (0,1)$, which completes the proof.
\end{proof}
\begin{remark}
	Let us now briefly discuss why the above argument breaks down for dimensions greater than three. To do this, we start by identifying its linchpin, namely the question whether or not the $v_\eps$ integral term in (\ref{eq:u_test_3}) can be absorbed by the only term with negative sign in said same inequality. The argument employed by us to answer this question positively in two and three dimensions centrally relies on interpolation and embedding properties of certain Sobolev spaces combined with some elliptic regularity theory to gain an estimate of the form
	\[
		\int_\Omega v_\eps^{1+\frac{np}{2}} \leq C \left( \int_\Omega u_\eps^p \right)^\alpha
	\]
	for all $p \in (1,\infty)$, $\eps \in (0,1)$ and $t > 0$ with some $\alpha < 1 + \frac{2}{n(p-1)}$ and $C > 0$ (see the inequality (\ref{eq:v_eps_interpolation_consequence}) above). But as for higher dimensions the exponent of $v_\eps$ increases and the type of interpolation properties used diminish in effectiveness, using the same argument in dimension four and higher only yields the above estimate with $\alpha > 1 + \frac{2}{n(p-1)}$, which is insufficient for our approach.
\end{remark}
\noindent
While generally the more difficult case to handle and thus needing some additional restrictions on the initial data and dimension to make work, the attractive case ($\chi > 0$) can be handled in a very similar manner to the repulsive case discussed above. The main difference is that the key problematic term changes from $\int_\Omega u^p_\eps v_\eps$ to $\int_\Omega u_\eps^{p+1}$ due to $\chi$'s change of sign. While this reduces proof length as we do not first need to split up the new problematic term to separate $u_\eps$ and $v_\eps$, the $\int_\Omega u_\eps^{p+1}$ term itself is already much more difficult to handle and can therefore only be compensated for by the dissipative term in two dimensions and for small initial mass, at least when using our approach based on the Gagliardo--Nirenberg inequality.

\begin{lemma}\label{lemma:the_central_ineq_attraction}
	For each $p\in(1,\infty)$, there exist constants $C_1 \equiv C_1(p) > 0$ and $C_2 \equiv C_2(p) > 0$ such that the following holds: 
	\\[0.5em]
	If $\chi > 0$, $n = 2$ and $m \leq C_1$ with $m$ as defined in (\ref{approx_mass_conservation}), then
	\begin{equation}  \label{eq:the_central_ineq_attraction}
		\frac{\d }{\d t}\int_\Omega u_\eps^p \leq - C_2\left( \int_\Omega u_\eps^p \right)^{1+\frac{2}{n(p-1)}} + C_2
	\end{equation}
	for all $t > 0$ and $\eps \in (0,1)$.
\end{lemma}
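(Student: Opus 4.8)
The plan is to mirror the structure of the proof of \Cref{lemma:the_central_ineq} as closely as possible, making the necessary adaptations for the sign change in $\chi$. First I would test the first equation in (\ref{approx_system}) with $u_\eps^{p-1}$ and integrate by parts exactly as before, but now the taxis term carries a plus sign, so after using the second equation to rewrite $\laplace v_\eps = v_\eps - \tfrac{u_\eps}{1+\eps u_\eps}$ I expect to arrive at something like
\begin{align*}
	\frac{1}{p(p-1)} \frac{\d}{\d t} \int_\Omega u_\eps^p
	&= -\frac{4}{p^2}\int_\Omega |\grad u_\eps^\frac{p}{2}|^2 - \frac{\chi}{p}\int_\Omega u^p_\eps \laplace v_\eps \\
	&= -\frac{4}{p^2}\int_\Omega |\grad u_\eps^\frac{p}{2}|^2 - \frac{\chi}{p}\int_\Omega u^p_\eps v_\eps + \frac{\chi}{p} \int_\Omega \frac{u^{p+1}_\eps}{1+\eps u_\eps} \\
	&\leq -\frac{4}{p^2}\int_\Omega |\grad u_\eps^\frac{p}{2}|^2 + \frac{\chi}{p} \int_\Omega u^{p+1}_\eps,
\end{align*}
where in the last step I drop the $-\tfrac{\chi}{p}\int_\Omega u_\eps^p v_\eps$ term (it has the favourable sign since $\chi,u_\eps,v_\eps \geq 0$) and bound $\tfrac{u_\eps}{1+\eps u_\eps}\leq u_\eps$. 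So the single problematic term is now $\int_\Omega u_\eps^{p+1}$, and unlike the repulsive case I never have to involve $v_\eps$ at all.

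Next I would control $\int_\Omega u_\eps^{p+1}$ using the interpolation inequalities of \Cref{lemma:u_interpol}. Since $n=2$, inequality (\ref{eq:u_interpol_1}) reads $\int_\Omega u_\eps^{p+1} \leq C m\int_\Omega |\grad u_\eps^{p/2}|^2 + C m^{p+1}$, where crucially the coefficient in front of the gradient term is linear in $m$. Plugging this into the differential inequality gives
\begin{equation*}
	\frac{1}{p(p-1)}\frac{\d}{\d t}\int_\Omega u_\eps^p \leq \Bigl(-\frac{4}{p^2} + \frac{\chi C m}{p}\Bigr)\int_\Omega |\grad u_\eps^\frac{p}{2}|^2 + \frac{\chi C m^{p+1}}{p}.
\end{equation*}
Here is where the smallness of $m$ enters: choosing $C_1 = C_1(p) > 0$ small enough that $\chi C m \leq \tfrac{2}{p}$ whenever $m\leq C_1$, the gradient term retains a strictly negative coefficient, say $\leq -\tfrac{2}{p^2}\int_\Omega |\grad u_\eps^{p/2}|^2$, leaving the dissipation intact up to a factor.

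Finally I would use the second interpolation inequality (\ref{eq:u_interpol_2}) — which in $n=2$ says $(\int_\Omega u_\eps^p)^{1+\frac{1}{p-1}} \leq C m^{\frac{p}{p-1}}\int_\Omega |\grad u_\eps^{p/2}|^2 + C m^{p+\frac{p}{p-1}}$ — exactly as in the repulsive proof to convert the surviving $-\int_\Omega |\grad u_\eps^{p/2}|^2$ into $-C_2(\int_\Omega u_\eps^p)^{1+\frac{2}{n(p-1)}}$, absorbing the constant remainders into the additive $+C_2$; renaming constants completes the proof. The main obstacle — really the only subtle point — is the bookkeeping around the mass smallness: one must make sure that the threshold $C_1$ depends only on $p$ (and on $\chi$, $\Omega$, which are fixed) via the constant from \Cref{lemma:u_interpol}, and that after fixing $C_1$ there is still a genuinely negative share of the dissipation left to run the second interpolation step. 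Everything else is a routine repetition of the Young's-inequality manipulations already carried out in \Cref{lemma:the_central_ineq}, and the restriction to $n=2$ is forced because for $n=3$ the exponent $p+\tfrac{2}{n}=p+\tfrac23$ in (\ref{eq:u_interpol_1}) falls short of $p+1$, so $\int_\Omega u_\eps^{p+1}$ cannot be absorbed by the gradient term regardless of how small $m$ is.
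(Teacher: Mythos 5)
Your proposal is correct and follows essentially the same route as the paper's proof: test with $u_\eps^{p-1}$, discard the sign-favourable $-\tfrac{\chi}{p}\int_\Omega u_\eps^p v_\eps$ term, absorb $\int_\Omega u_\eps^{p+1}$ via (\ref{eq:u_interpol_1}) using the smallness of $m$ (with the threshold $C_1$ chosen exactly through the Gagliardo--Nirenberg constant as you describe), and finish with (\ref{eq:u_interpol_2}). Your closing remark on why $n=3$ fails here also matches the paper's reasoning.
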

\begin{proof}
Fix $p \in (1,\infty)$. Similar to the argument in \Cref{lemma:the_central_ineq}, we start again by testing the first equation in (\ref{approx_system}) with $u_\eps^{p-1}$ to gain	
\begin{align*}
	\frac{1}{p(p-1)} \frac{\d}{\d t} \int_\Omega u_\eps^p =& -\frac{4}{p^2}\int_\Omega |\grad u_\eps^\frac{p}{2}|^2 + \frac{|\chi|}{p}\int_\Omega \grad u^{p}_\eps \cdot \grad v_\eps \\
	=&-\frac{4}{p^2}\int_\Omega |\grad u_\eps^\frac{p}{2}|^2 - \frac{|\chi|}{p}\int_\Omega u^p_\eps \laplace v_\eps \\
	=&-\frac{4}{p^2}\int_\Omega |\grad u_\eps^\frac{p}{2}|^2 - \frac{|\chi|}{p}\int_\Omega u^p_\eps v_\eps + \frac{|\chi|}{p} \int_\Omega\frac{u_\eps^{p+1}}{1+\eps u_\eps}\\
	\leq&-\frac{4}{p^2}\int_\Omega |\grad u_\eps^\frac{p}{2}|^2 + \frac{|\chi|}{p}\int_\Omega u^{p+1}_\eps \numberthis \label{eq:u_test_attraction}
\end{align*}
for all $t > 0$ and $\eps \in (0,1)$. Due to the  interpolation result (\ref{eq:u_interpol_1}) from \Cref{lemma:u_interpol}, we further know that there exists $K_1 > 0$ such that 
\begin{equation} \label{eq:attraction_gni}
	\int_\Omega u_\eps^{p+1} \leq K_1 m \int_\Omega |\grad u_\eps^\frac{p}{2}|^2 + K_1 m^{p + 1} = K_1 m \int_\Omega |\grad u_\eps^\frac{p}{2}|^2+ K_2 
\end{equation}
for all $t > 0$ and $\eps \in (0,1)$ with $K_2 \defs K_1m^{p+1}$. Let now $C_1(p) \defs \frac{1}{|\chi|}\frac{2}{K_1 p}$. Then combining (\ref{eq:attraction_gni}) with (\ref{eq:u_test_attraction}) yields
\[
		\frac{1}{p(p-1)} \frac{\d}{\d t} \int_\Omega u_\eps^p \leq -\frac{2}{p^2}\int_\Omega |\grad u_\eps^\frac{p}{2}|^2 + \frac{|\chi|}{p}K_2
\]
for all $t > 0$ and $\eps \in (0,1)$ because $m \leq C_1$. Applying the interpolation inequality (\ref{eq:u_interpol_2}) from \Cref{lemma:u_interpol} to the above then immediately gives us our desired result.
\end{proof}

\noindent Given that we will in two dimensions actually only need the inequality (\ref{eq:the_central_ineq_attraction}) to be true for the values $p = \frac{5}{2}$ and $p = 8$ (cf.\ \Cref{lemma:u_linfty}, \Cref{lemma:ugradv_integrability}), we can now fix the relevant minimal constant $C_m > 0$ for these two cases. Note that this is exactly the constant $C_m$ mentioned in \Cref{theorem:main}.

\begin{corollary}\label{corollary:the_central_ineq_attraction}
	There exists a minimal $C_m > 0$ such that, if $\chi > 0$, $n = 2$ and $m\leq C_m$, the inequality  (\ref{eq:the_central_ineq_attraction}) holds for $p \in \{\frac{5}{2}, 8\}$ and all $t > 0$, $\eps \in (0,1)$.
\end{corollary}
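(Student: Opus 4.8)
The statement is an immediate consequence of \Cref{lemma:the_central_ineq_attraction}, so the plan is simply to specialize that lemma to the two exponents we will actually need later on. Concretely, I would apply \Cref{lemma:the_central_ineq_attraction} once with $p = \tfrac{5}{2}$, obtaining constants $C_1(\tfrac{5}{2}), C_2(\tfrac{5}{2}) > 0$, and once with $p = 8$, obtaining $C_1(8), C_2(8) > 0$; in each case the lemma tells us that, provided $\chi > 0$, $n = 2$ and $m \leq C_1(p)$ (with $m$ as in (\ref{approx_mass_conservation})), the differential inequality (\ref{eq:the_central_ineq_attraction}) holds for that value of $p$ and all $t > 0$, $\eps \in (0,1)$, with the associated constant $C_2(p)$.

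To obtain a single mass threshold admissible for both exponents simultaneously, I would then set $C_m \defs \min\{\,C_1(\tfrac{5}{2}),\, C_1(8)\,\}$, which is positive. For any initial mass $m \leq C_m$ we then have $m \leq C_1(\tfrac{5}{2})$ and $m \leq C_1(8)$ at the same time, so the two invocations of \Cref{lemma:the_central_ineq_attraction} yield (\ref{eq:the_central_ineq_attraction}) for $p = \tfrac{5}{2}$ (with constant $C_2(\tfrac{5}{2})$) and for $p = 8$ (with constant $C_2(8)$), as claimed. The word \emph{minimal} here should be read as fixing a canonical such threshold: since every smaller positive number works equally well, we simply take $C_m$ to be the largest value of the form $\min\{C_1(\tfrac{5}{2}), C_1(8)\}$ compatible with the lemma (equivalently, the supremum over all admissible thresholds). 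This is precisely the constant referred to in \Cref{theorem:main}.

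There is essentially no obstacle in this step: all of the analytic content — the testing procedure, the Gagliardo--Nirenberg absorption, and in particular the fact that the admissible mass range shrinks as $p$ grows — lives inside \Cref{lemma:the_central_ineq_attraction} and \Cref{lemma:u_interpol}. The only point requiring a little care is bookkeeping: the coefficient $C_2$ in (\ref{eq:the_central_ineq_attraction}) genuinely depends on $p$, so the corollary delivers two differential inequalities with (possibly) different constants rather than one uniform inequality; this is harmless and is exactly the form needed for the later bootstrap arguments in \Cref{lemma:u_linfty} and \Cref{lemma:ugradv_integrability}.
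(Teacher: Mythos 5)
Your proposal is correct and matches the paper's (implicit) argument exactly: the paper gives no proof of this corollary, treating it as an immediate specialization of \Cref{lemma:the_central_ineq_attraction} to $p=\tfrac{5}{2}$ and $p=8$ with the mass threshold taken as the minimum of the two constants $C_1(p)$. Your additional remarks on the meaning of ``minimal'' and the $p$-dependence of $C_2$ are sensible bookkeeping and introduce no gap.
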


\noindent 
As the differential inequalities (\ref{eq:the_central_ineq}) and (\ref{eq:the_central_ineq_attraction}) are essential for all further arguments (either to ensure sufficient regularity from a time $t_0 > 0$ onward or to ensure uniform continuity in $t = 0$), we will from now on always assume one of (\ref{scenario:2d+repulsion})--(\ref{scenario:3d+repulsion}) to be true (with the constant $C_m$ in (\ref{scenario:2d+attraction}) chosen exactly as in the corollary above). We do this, as this means that either \Cref{lemma:the_central_ineq} or \Cref{corollary:the_central_ineq_attraction} will be always usable from now on.

\section{Initial data independent a priori estimates for $u_\eps$ and $v_\eps$ on $\Omega\times(t_0,\infty)$ for all $t_0 > 0$}

Given that we have now established the central differential inequalities (\ref{eq:the_central_ineq}) and (\ref{eq:the_central_ineq_attraction}) in all of the cases (\ref{scenario:2d+repulsion})--(\ref{scenario:3d+repulsion}) and for all relevant values of $p$ used in this section, we will now prove certain $u_{0,\eps}$-independent smoothing properties of the approximate solutions. That is, we will essentially show that, from each $t_0 > 0$ onward, both solution components are bounded in sufficiently good function spaces independent of the initial data ${u_{0,\eps}}$.
\\[0.5em]
It will be these bounds combined with the compact embedding properties of said function spaces that will then allow us in the following section to gain a null sequence $(\eps_j)_{j\in\N}$, along of which the approximate solutions converge to a tuple of functions $(u, v)$, which serves as a candidate for our actual solution. 
\\[0.5em]
As the first step in the bootstrap argument leading us toward this goal, we begin by extracting the following $L^\infty(\Omega)$ boundedness property from \Cref{lemma:the_central_ineq} or \Cref{corollary:the_central_ineq_attraction}.
\begin{lemma} \label{lemma:u_linfty}
	For each $t_0 > 0$, there exists a constant $C \equiv C(t_0) > 0$ such that
	\[
		\|u_\eps(\cdot, t)\|_\L{\infty} \leq C
	\]
	for all $t > t_0$ and $\eps \in (0,1)$. 
\end{lemma}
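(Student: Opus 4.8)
The plan is to run a Moser-type iteration (or a semigroup bootstrap) starting from the differential inequalities established in \Cref{lemma:the_central_ineq} / \Cref{corollary:the_central_ineq_attraction} and \Cref{lemma:ode_bound}. The first and key observation is that for every fixed $p \in (1,\infty)$ — or at least for every $p$ in a suitable increasing sequence, and in the attractive two–dimensional case for $p \in \{\tfrac52, 8\}$ — the function $t \mapsto \int_\Omega u_\eps^p(\cdot,t)$ satisfies an ODE of the form $y' \leq -Ay^{1+\frac{2}{n(p-1)}} + B$ with $A,B>0$ independent of $\eps$ and of the initial data. Applying \Cref{lemma:ode_bound} with $\alpha = 1 + \frac{2}{n(p-1)} > 1$ then gives
\[
	\int_\Omega u_\eps^p(\cdot,t) \leq C(p)\, t^{-\frac{n(p-1)}{2}} + C(p) \qquad \text{for all } t>0,\ \eps\in(0,1),
\]
so in particular, for any $t_0>0$, $\|u_\eps(\cdot,t)\|_{\L p}$ is bounded by a constant depending only on $p$ and $t_0$ for all $t>t_0$.

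**From $L^p$ bounds to an $L^\infty$ bound.**
With uniform $L^p$ control of $u_\eps$ on $(t_0,\infty)$ in hand, I would feed this into elliptic regularity for the second equation and then into the variation-of-constants formula for the first, exactly as in the proof of \Cref{lemma:approx_existence} but now with $\eps$-independent constants. Concretely: fix $t_0>0$ and pick a large $p$; elliptic regularity (\cite[Theorem 19.1]{FriedmanPDE}) applied to $0 = \laplace v_\eps - v_\eps + \tfrac{u_\eps}{1+\eps u_\eps}$ together with $\tfrac{u_\eps}{1+\eps u_\eps}\le u_\eps$ gives a uniform $W^{2,p}(\Omega)$, hence $W^{1,\infty}(\Omega)$, bound for $v_\eps(\cdot,t)$ for $t>t_0/2$. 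Then writing, for $t>t_0$,
\[
	u_\eps(\cdot,t) = e^{(t-t_0/2)\laplace} u_\eps(\cdot,t_0/2) - \chi\int_{t_0/2}^t e^{(t-s)\laplace}\,\div\!\big(u_\eps\grad v_\eps\big)\d s,
\]
and using the smoothing estimates for $(e^{t\laplace})_{t>0}$ (\cite[Lemma 1.3]{WinklerSemigroupRegularity}) — the $L^p\!\to\!L^\infty$ estimate on the first term and the $\div$-type estimate with exponent $-\tfrac12-\tfrac{n}{2q}$ on the second, with $q\in(n,\infty)$ chosen below our fixed $p$ — yields, via Hölder, a bound
\[
	\|u_\eps(\cdot,t)\|_{\L\infty} \leq C\|u_\eps(\cdot,t_0/2)\|_{\L p} + C\!\int_{t_0/2}^t\!\big(1+(t-s)^{-\frac12-\frac{n}{2q}}\big)e^{-\lambda(t-s)}\|u_\eps(\cdot,s)\|_{\L q}\|\grad v_\eps(\cdot,s)\|_{\L\infty}\d s.
\]
Since $\tfrac12+\tfrac{n}{2q}<1$ for $q>n$, the time integral converges and is bounded independently of $t$ and $\eps$; together with the uniform $L^p$ and $W^{1,\infty}$ bounds already obtained this closes the estimate and gives the claimed $C(t_0)$.

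**Main obstacle.**
There is no deep difficulty here — the content was already extracted in \Cref{lemma:the_central_ineq}, \Cref{lemma:ode_bound} and \Cref{lemma:u_interpol}. The one point requiring care is the bookkeeping of which $p$ (and hence which $q>n$) is needed: in scenarios (\ref{scenario:2d+repulsion}) and (\ref{scenario:3d+repulsion}) the differential inequality holds for \emph{all} $p\in(1,\infty)$, so any $q>n$ is admissible, whereas in the attractive case (\ref{scenario:2d+attraction}) only $p\in\{\tfrac52,8\}$ is guaranteed by \Cref{corollary:the_central_ineq_attraction}; one must check that $p=8>2=n$ suffices to make both the singularity $(t-s)^{-\frac12-\frac{n}{2q}}$ integrable and the Hölder pairing of $u_\eps\in L^q$ with $\grad v_\eps\in L^\infty$ work, which it does. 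A second minor subtlety is that the constants blow up as $t_0\searrow 0$ (the $t^{-n(p-1)/2}$ factor), so the statement is genuinely only local away from $t=0$; this is exactly what is expected and is consistent with the integrability-up-to-zero remarks preceding \Cref{lemma:ode_bound}.
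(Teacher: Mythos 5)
Your proposal is correct and follows essentially the same route as the paper: apply \Cref{lemma:the_central_ineq} or \Cref{corollary:the_central_ineq_attraction} with $p=8$ together with \Cref{lemma:ode_bound} to get a uniform $L^8(\Omega)$ bound on $(t_0/2,\infty)$, feed this into elliptic regularity for $v_\eps$, and close via the variation-of-constants formula with the semigroup smoothing estimates. The only (immaterial) differences are that the paper pairs $u_\eps\in L^8$ with $\grad v_\eps\in L^8$ to place $u_\eps\grad v_\eps$ in $L^4$ rather than using a $W^{1,\infty}$ bound on $v_\eps$, and integrates the Duhamel term over the fixed window $(t-t_0/2,t)$ instead of $(t_0/2,t)$.
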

\begin{proof}
	Fix $t_0 > 0$. According to \Cref{lemma:the_central_ineq} or \Cref{corollary:the_central_ineq_attraction}, there exists $K_1 > 0$ such that
	\[
		\frac{\d}{\d t}\int_\Omega u_\eps^8 \leq -K_1 \left( \int_\Omega u_\eps^8 \right)^{1+\frac{2}{7n}} + K_1 
	\]
	for all $t > 0$ and $\eps \in (0,1)$, which by \Cref{lemma:ode_bound} gives us $K_2 > 0$ such that
	\[
		\int_\Omega u^8_\eps(\cdot, t) \leq K_2 t^{-\frac{7}{2}n} + K_2 \leq K_2 \left(\tfrac{1}{2}t_0\right)^{-\frac{7}{2}n} + K_2 \sfed K_3
	\]
	for all $t > \frac{1}{2}t_0$ and $\eps \in (0,1)$. Due to the Hölder inequality and standard elliptic regularity theory (cf.\ \cite[Theorem 19.1]{FriedmanPDE}) 
	combined with (\ref{eq:v_eps_leq_u_eps}), this then implies that there exists $K_4 > 0$ such that
	\begin{align*}
		\|u_\eps \grad v_\eps\|_\L{4} &\leq \|u_\eps \|_\L{8}\|\grad v_\eps\|_\L{8} \leq \|u_\eps \|_\L{8}\|v_\eps\|_{W^{2,8}(\Omega)} \\
		&\leq K_4\|u_\eps \|_\L{8}\left\|\frac{u_\eps}{1+\eps u_\eps} \right\|_\L{8} \leq K_4\|u_\eps \|^2_\L{8} \leq K_4 \sqrt[4]{K_3} \sfed K_5
	\end{align*}
	for all $t > \frac{1}{2}t_0$ and $\eps \in (0,1)$. We can now use the variation-of-constants representation of $u_\eps$ on $(t - \frac{1}{2}t_0, t)$ for all $t > t_0$ combined with  well-known smoothness properties (cf.\  \cite[Lemma 1.3]{WinklerSemigroupRegularity}) of the semigroup $(e^{t\laplace})_{t > 0}$ to gain $K_6 > 0$ such that
	\begin{align*}
		\|u_\eps(\cdot, t)\|_\L{\infty} 
		&= \left\| \; e^{\frac{1}{2}t_0\laplace} u_\eps(\cdot, t-\tfrac{1}{2}t_0) + \chi\int_{t-\frac{1}{2}t_0}^t e^{(t-s)\laplace} \div (u_\eps(\cdot, s) \grad v_\eps(\cdot, s)) \d s \; \right\|_\L{\infty} \\
		&\leq K_6\|u_\eps(\cdot, t - \tfrac{1}{2}t_0)\|_\L{8} \left(1 + (\tfrac{1}{2}t_0)^{-\frac{n}{16}}\right) + K_6\int_{t - \frac{1}{2}t_0}^{t} \left(1 + (t-s)^{-\frac{1}{2} - \frac{n}{8}}\right)\|u_\eps\grad v_\eps\|_\L{4}  \d s\\
		&\leq \sqrt[8]{K_3} K_6 \left(1 + (\tfrac{1}{2}t_0)^{-\frac{n}{16}}\right) + K_5 K_6\int_{t - \frac{1}{2}t_0}^{t} \left(1 + (t-s)^{-\frac{1}{2} - \frac{n}{8}} \right)  \d s \\
		&=  \sqrt[8]{K_3} K_6 \left(1 + (\tfrac{1}{2}t_0)^{-\frac{n}{16}}\right) + K_5 K_6 \left(\frac{1}{2}t_0 + \frac{1}{\frac{1}{2} - \frac{n}{8}}(\tfrac{1}{2}t_0)^{\frac{1}{2} - \frac{n}{8}}\right)
	\end{align*}
	for all $t > t_0$ and $\eps \in (0,1)$ because $-\frac{1}{2} -\frac{n}{8} > -1$ as $n < 4$. This completes the proof.
\end{proof}

\noindent
We can now use this result to gain a uniform, global $C^{1+\alpha}(\overline{\Omega})$-type bound for $v_\eps$ due to standard elliptic regularity theory and embedding properties of Sobolev spaces into Hölder spaces. This in turn then allows us to apply parabolic regularity theory from \cite{PorzioVespriHoelder} to achieve $C^{\alpha, \frac{\alpha}{2}}(\overline{\Omega}\times[t_0,t_1])$-type bounds for $u_\eps$.

\begin{lemma}
	\label{lemma:uv_hoelder_1}
	For each $t_0 > 0$, there exist $C \equiv C(t_0) > 0$ and $\alpha \equiv \alpha(t_0) \in (0,1)$ such that
	\begin{equation}\label{eq:u_c1plusalpha}
		\|u_\eps\|_{C^{\alpha, \frac{\alpha}{2}}(\overline{\Omega}\times[t_0, t_1])} \leq C 
	\end{equation}
	for all $t_1 > t_0$, $\eps \in (0,1)$ and 
	\begin{equation}\label{eq:v_c1plusalpha}
		\|v_\eps(\cdot, t)\|_{C^{1+\alpha}(\overline{\Omega})} \leq C
	\end{equation}
	for all $t > t_0$, $\eps \in (0,1)$.
\end{lemma}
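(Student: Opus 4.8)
The plan is to bootstrap from the uniform $L^\infty(\Omega)$ bound on $u_\eps$ established in \Cref{lemma:u_linfty}. First I would fix $t_0 > 0$ and apply \Cref{lemma:u_linfty} to obtain $K_1 > 0$ with $\|u_\eps(\cdot,t)\|_\L{\infty} \leq K_1$ for all $t > \tfrac{1}{2}t_0$ and $\eps \in (0,1)$. Since $\frac{u_\eps}{1+\eps u_\eps} \leq u_\eps$ pointwise, this same bound controls the source term in the second equation of (\ref{approx_system}) uniformly in $\L{q}$ for every $q \in (1,\infty)$. Applying standard elliptic regularity (cf.\ \cite[Theorem 19.1]{FriedmanPDE}) to the operator $-\laplace + 1$ with this bounded source then gives a uniform $W^{2,q}(\Omega)$ bound for $v_\eps(\cdot, t)$ on $(\tfrac{1}{2}t_0, \infty)$ for all $q \in (1,\infty)$. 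Choosing $q > n$ and using the Sobolev embedding $W^{2,q}(\Omega) \hookrightarrow C^{1+\alpha}(\overline{\Omega})$ for a suitable $\alpha = \alpha(q) \in (0,1)$ yields (\ref{eq:v_c1plusalpha}), up to shrinking $t_0$ to $\tfrac{1}{2}t_0$, which is harmless since $t_0$ was arbitrary.

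For (\ref{eq:u_c1plusalpha}), I would rewrite the first equation of (\ref{approx_system}) in the divergence form
\[
	{u_\eps}_t = \div\bigl( \grad u_\eps - \chi u_\eps \grad v_\eps \bigr) = \div\bigl( a_\eps(x,t) \grad u_\eps + \vec b_\eps(x,t) \bigr)
\]
with $a_\eps \equiv 1$ and $\vec b_\eps \defs -\chi u_\eps \grad v_\eps$. By the $L^\infty$ bound on $u_\eps$ and the $C^{1+\alpha}$ bound on $v_\eps$ just obtained, $\vec b_\eps$ is bounded in $L^\infty(\Omega \times (\tfrac{1}{2}t_0, \infty))$ uniformly in $\eps$; the coefficient $a_\eps$ is trivially uniformly elliptic and bounded. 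This places the equation squarely in the framework of De Giorgi--Nash--Moser / Porzio--Vespri interior-and-boundary Hölder regularity for quasilinear parabolic equations in divergence form with bounded measurable coefficients and bounded lower-order data (cf.\ \cite{PorzioVespriHoelder}). Applying that theory on the time slab $\overline{\Omega}\times[t_0, t_1]$ — with the uniform $L^\infty$ bound on $u_\eps$ on the slightly larger slab $[\tfrac{1}{2}t_0, t_1]$ providing the required a priori control — produces $\alpha \in (0,1)$ and $C > 0$, depending on $t_0$ and the structural constants but not on $\eps$ or $t_1$, such that $\|u_\eps\|_{C^{\alpha, \alpha/2}(\overline{\Omega}\times[t_0, t_1])} \leq C$. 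One should note that the Hölder exponent from the parabolic estimate may differ from the one in the elliptic step; taking the minimum of the two gives a single $\alpha$ that works for both (\ref{eq:u_c1plusalpha}) and (\ref{eq:v_c1plusalpha}), and shrinking the constant accordingly.

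The main obstacle is the bookkeeping around the time variable: the parabolic Hölder estimate on $[t_0, t_1]$ needs data (the $L^\infty$ bound on $u_\eps$ and the coefficient bounds) on a strictly larger time interval reaching below $t_0$, and one must check that the resulting constant is genuinely independent of the right endpoint $t_1$ — this follows because \Cref{lemma:u_linfty} and the elliptic bounds hold on all of $(\tfrac{1}{2}t_0,\infty)$ with constants depending only on $t_0$, and the Porzio--Vespri estimate on a slab of fixed temporal width is translation-invariant in $t$. A secondary point requiring care is verifying that the cited regularity result genuinely covers Neumann boundary conditions up to $\partial\Omega$ with merely bounded (not continuous) lower-order terms; since $\vec b_\eps$ is only uniformly bounded in $L^\infty$ and not equicontinuous, one must invoke the version of the theory that tolerates bounded measurable data, which is precisely what \cite{PorzioVespriHoelder} provides.
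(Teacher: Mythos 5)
Your proposal is correct and follows essentially the same route as the paper: elliptic regularity plus Sobolev embedding for (\ref{eq:v_c1plusalpha}), and the Porzio--Vespri parabolic H\"older theory applied to the first equation in divergence form with the flux split as $\grad u_\eps - \chi u_\eps \grad v_\eps$ (bounded data) for (\ref{eq:u_c1plusalpha}). The paper handles the $t_1$-independence exactly as you anticipate, by applying the estimate on uniformly sized slabs $[t-\tfrac{1}{2}t_0, t+1]$ and covering the case $|t-s|\geq 1$ trivially via the $L^\infty$ bound.
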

\begin{proof}
	Fix $t_0 > 0$. \\[0.5em]
	The inequality (\ref{eq:v_c1plusalpha}) is a straightforward consequence of standard elliptic regularity theory (cf.\ \cite[Theorem~19.1]{FriedmanPDE}) combined with (\ref{eq:v_eps_leq_u_eps}), the embedding properties of the Sobolev spaces $W^{2,p}(\Omega)$ into Hölder spaces $C^{1+\beta}(\overline{\Omega})$ for sufficiently large values of $p$ in relation to $\beta$ (cf.\ \cite[Theorem 2.72]{EllipticFunctionSpaces}), the Hölder inequality and \Cref{lemma:u_linfty}.
	\\[0.5em]
	After this straightforward application of elliptic regularity theory, we will now transition to its parabolic counterpart found in \cite[Theorem 1.3]{PorzioVespriHoelder} to gain (\ref{eq:u_c1plusalpha}). For this, we first fix $K_1 > 0$ such that 
	\begin{equation} \label{eq:bound_for_porzio_vespri}
		\|u_\eps(\cdot, t)\|_\L{\infty} \leq K_1  \stext{and} \|\grad v_\eps(\cdot, t)\|_\L{\infty} \leq K_1 
	\end{equation}
	for all $t \geq \tfrac{1}{2}t_0$ and $\eps \in (0,1)$ according to \Cref{lemma:u_linfty} and (\ref{eq:v_c1plusalpha}). 
	\\[0.5em]
	In the notation of reference \cite{PorzioVespriHoelder}, the first equation of (\ref{approx_system}) considered in isolation can be then be written as \[
		u_{\eps t} - \div a(x,t,u_\eps, \grad u_\eps) = b(x,t,u_\eps,\grad u_\eps)
	\] with $a(x,t,y,z) \defs z - \chi u_\eps(x,t)\grad v_\eps(x, t)$ for all $(x,t,y,z) \in \Omega\times[t-\tfrac{1}{2}t_0, t+1]\times\R\times\R^n$ and $b \equiv 0$ on any interval $[t-\tfrac{1}{2}t_0, t+1]$ for all $t \geq t_0$. Due to (\ref{eq:bound_for_porzio_vespri}), it is easy to see that $a$ and $b$ have all necessary structure conditions with constants and parameters only depending on $t_0$ and $K_1$. An application of Theorem 1.3 from \cite{PorzioVespriHoelder} on each interval $[t-\tfrac{1}{2}t_0, t+1]$, $t \geq t_0$, therefore yields constants $\alpha \in (0,1)$ and $K_2 > 0$, only dependent on $t_0$ and $K_1$, such that
	\begin{equation} \label{eq:u_hoelder_1_close}
		|u_\eps(x,t) - u_\eps(y,s)| \leq K_2 (|x-y|^\alpha + |t-s|^\frac{\alpha}{2} )
	\end{equation}
	for all $\eps \in (0,1)$, $x,y \in \overline{\Omega}$ and $t,s \in [t_0, \infty)$ with $|t-s| < 1$. Conversely,
	\begin{equation} \label{eq:u_hoelder_1_apart}
		|u_\eps(x,t) - u_\eps(y,s)| \leq 2 K_1 \leq 2 K_1 ( |x-y|^\alpha + |t-s|^\frac{\alpha}{2} )
	\end{equation}
	for all $\eps \in (0,1), x,y \in \overline{\Omega}$ and $t,s\in[t_0,\infty)$  with $|t-s| \geq 1$. Combined with (\ref{eq:bound_for_porzio_vespri}), the inequalities (\ref{eq:u_hoelder_1_close}) and (\ref{eq:u_hoelder_1_apart}) yield our desired result.
\end{proof} \noindent
Given that the critical source term in the second equation in (\ref{approx_system}) is in fact not $u_\eps$ but $\frac{u_\eps}{1+\eps u_\eps}$, we now derive the following corollary translating the above results to said source term.
\begin{corollary}\label{corollary:u_hoelder_1}
	For each $t_0 > 0$, there exist $C \equiv C(t_0) > 0$ and $\alpha \equiv \alpha(t_0) \in (0,1)$ such that
	\[
		\left\|\frac{u_\eps}{1+\eps u_\eps}\right\|_{C^{\alpha, \frac{\alpha}{2}}(\overline{\Omega}\times[t_0, t_1])} \leq C
	\]
	for all $t_1 > t_0$ and $\eps \in (0,1)$.
\end{corollary}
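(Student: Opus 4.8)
The goal is to transfer the Hölder bound for $u_\eps$ on parabolic cylinders $\overline{\Omega}\times[t_0,t_1]$ from \Cref{lemma:uv_hoelder_1} to the nonlinear expression $\frac{u_\eps}{1+\eps u_\eps}$, with a constant independent of $\eps$ and of $t_1$. The key observation is that the map $\Phi_\eps\colon[0,\infty)\to[0,\infty)$, $\Phi_\eps(s)=\frac{s}{1+\eps s}$, is Lipschitz with constant at most $1$, uniformly in $\eps\in(0,1)$, since $\Phi_\eps'(s)=\frac{1}{(1+\eps s)^2}\in(0,1]$. Composition with a uniformly Lipschitz function does not spoil Hölder seminorms and only improves (does not worsen) the constant, so the desired estimate follows almost immediately.

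\begin{proof}
	Fix $t_0 > 0$ and let $C \equiv C(t_0) > 0$ and $\alpha \equiv \alpha(t_0) \in (0,1)$ be as provided by \Cref{lemma:uv_hoelder_1}, so that $\|u_\eps\|_{C^{\alpha,\frac{\alpha}{2}}(\overline{\Omega}\times[t_0,t_1])} \leq C$ for all $t_1 > t_0$ and all $\eps \in (0,1)$. For $\eps \in (0,1)$ consider the function $\Phi_\eps\colon [0,\infty)\to [0,\infty)$ defined by $\Phi_\eps(s) \defs \frac{s}{1+\eps s}$. Since $u_\eps$ is nonnegative by \Cref{lemma:approx_existence}, we have $\frac{u_\eps}{1+\eps u_\eps} = \Phi_\eps \circ u_\eps$. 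A direct computation gives $\Phi_\eps'(s) = \frac{1}{(1+\eps s)^2}$, so that $0 < \Phi_\eps'(s) \leq 1$ for all $s \geq 0$, and by the mean value theorem
	\[
		|\Phi_\eps(s_1) - \Phi_\eps(s_2)| \leq |s_1 - s_2| \;\;\;\; \text{ for all } s_1, s_2 \geq 0 \text{ and all } \eps \in (0,1).
	\]
	Moreover $0 \leq \Phi_\eps(s) \leq s$ for all $s \geq 0$.
	\\[0.5em]
	Now fix $t_1 > t_0$ and $\eps \in (0,1)$. For arbitrary $(x,t), (y,s) \in \overline{\Omega}\times[t_0,t_1]$ the Lipschitz bound for $\Phi_\eps$ yields
	\[
		\left| \frac{u_\eps(x,t)}{1+\eps u_\eps(x,t)} - \frac{u_\eps(y,s)}{1+\eps u_\eps(y,s)} \right| = |\Phi_\eps(u_\eps(x,t)) - \Phi_\eps(u_\eps(y,s))| \leq |u_\eps(x,t) - u_\eps(y,s)| \leq C\left( |x-y|^\alpha + |t-s|^\frac{\alpha}{2} \right),
	\]
	where the last step uses (\ref{eq:u_c1plusalpha}) from \Cref{lemma:uv_hoelder_1}. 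This controls the relevant Hölder seminorms of $\frac{u_\eps}{1+\eps u_\eps}$ on $\overline{\Omega}\times[t_0,t_1]$ by $C$. For the supremum part, $0 \leq \frac{u_\eps}{1+\eps u_\eps} \leq u_\eps$ together with (\ref{eq:u_c1plusalpha}) gives $\left\| \frac{u_\eps}{1+\eps u_\eps} \right\|_{L^\infty(\overline{\Omega}\times[t_0,t_1])} \leq \|u_\eps\|_{L^\infty(\overline{\Omega}\times[t_0,t_1])} \leq C$. Combining these two observations, we obtain
	\[
		\left\| \frac{u_\eps}{1+\eps u_\eps} \right\|_{C^{\alpha, \frac{\alpha}{2}}(\overline{\Omega}\times[t_0,t_1])} \leq 2C
	\]
	for all $t_1 > t_0$ and $\eps \in (0,1)$, where the factor $2$ accounts for the combination of the supremum norm and the two Hölder seminorms. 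As $2C$ depends only on $t_0$, this completes the proof.
\end{proof}
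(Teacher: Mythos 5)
Your proof is correct and follows essentially the same route as the paper: both arguments reduce the claim to the fact that $s \mapsto \frac{s}{1+\eps s}$ is $1$-Lipschitz on $[0,\infty)$ uniformly in $\eps$ (the paper verifies this via an algebraic identity, you via the mean value theorem) together with the pointwise bound $\frac{s}{1+\eps s}\leq s$, and then invoke (\ref{eq:u_c1plusalpha}).
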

\begin{proof}
	This follows directly from (\ref{eq:u_c1plusalpha}) in \Cref{lemma:uv_hoelder_1} due to the fact that
	\[
		\left| \frac{u_\eps(x,t)}{1+\eps u_\eps(x,t)} \right| \leq |u_\eps(x,t)| 
	\] 
	and
	\[
		\left|\frac{u_\eps(x,t)}{1+\eps u_\eps(x,t)} - \frac{u_\eps(y,s)}{1+\eps u_\eps(y,s)}\right| = \frac{|u_\eps(x,t)-u_\eps(y,s)|}{(1+\eps u_\eps(x,t))(1+\eps u_\eps(y,s))} \leq |u_\eps(x,t)-u_\eps(y,s)|
	\]
	for all $x,y \in \overline{\Omega}$ and $t,s \in [0,\infty)$.
\end{proof}\noindent
While the Hölder bounds derived above are already quite strong and something similar to the following lemma could likely be derived from them by more abstract means, we will now give a very short argument, which provides us with a crucial bound for the gradients of the functions $u_\eps$. The argument is based on a straightforward testing procedure for the first equation in (\ref{approx_system}) with $u_\eps$. We do this to later be able to translate certain weak solution properties from the approximate solutions to the solution candidates constructed in the following section.
\begin{lemma}\label{lemma:grad_u_bound}
	For each $t_1> t_0 > 0$, there exists a constant $C \equiv C(t_0, t_1) > 0$ such that
	\[
	\|\grad u_\eps\|_{L^2(\Omega\times(t_0, t_1))} \leq C.
	\]
	for all $\eps \in (0,1)$.
\end{lemma}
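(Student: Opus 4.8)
The plan is to use the standard energy identity obtained by testing the first equation in \eqref{approx_system} with $u_\eps$ itself and then controlling the resulting drift term using the bounds already available from the preceding lemmas. First I would multiply ${u_\eps}_t = \laplace u_\eps - \chi \div(u_\eps \grad v_\eps)$ by $u_\eps$ and integrate over $\Omega$. Integration by parts (using the Neumann boundary conditions) gives
\[
	\frac{1}{2}\frac{\d}{\d t}\int_\Omega u_\eps^2 + \int_\Omega |\grad u_\eps|^2 = \chi \int_\Omega u_\eps \,\grad u_\eps \cdot \grad v_\eps
\]
for all $t > 0$ and $\eps \in (0,1)$. To make this rigorous on a time interval away from zero, I would work on $[\tfrac{1}{2}t_0, t_1]$ or similar; the classical regularity of $(u_\eps, v_\eps)$ from \Cref{lemma:approx_existence} makes all these manipulations legitimate.

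Next I would absorb the right-hand side. By Young's inequality, $|\chi|\int_\Omega u_\eps\,\grad u_\eps\cdot\grad v_\eps \leq \tfrac{1}{2}\int_\Omega |\grad u_\eps|^2 + \tfrac{\chi^2}{2}\int_\Omega u_\eps^2 |\grad v_\eps|^2$. The key point is that on $[\tfrac{1}{2}t_0, \infty)$ both $\|u_\eps(\cdot,t)\|_\L{\infty}$ and $\|\grad v_\eps(\cdot,t)\|_\L{\infty}$ are bounded uniformly in $\eps$ — the former by \Cref{lemma:u_linfty}, the latter by \eqref{eq:v_c1plusalpha} in \Cref{lemma:uv_hoelder_1} (or, more directly, by the $W^{1,\infty}(\Omega)$ bound obtained from elliptic regularity applied to the second equation together with \Cref{lemma:u_linfty}). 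Hence $\tfrac{\chi^2}{2}\int_\Omega u_\eps^2|\grad v_\eps|^2 \leq K_1$ for some $\eps$-independent $K_1 = K_1(t_0)$ and all $t \geq \tfrac{1}{2}t_0$, and we arrive at
\[
	\frac{\d}{\d t}\int_\Omega u_\eps^2 + \int_\Omega |\grad u_\eps|^2 \leq 2K_1
	\qquad\text{for all } t \geq \tfrac{1}{2}t_0,\ \eps \in (0,1).
\]

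Finally I would integrate this differential inequality in time over $[\tfrac{1}{2}t_0, t_1]$ and use that $\int_\Omega u_\eps^2(\cdot, \tfrac{1}{2}t_0) \leq |\Omega|\,\|u_\eps(\cdot,\tfrac{1}{2}t_0)\|_\L{\infty}^2$ is bounded uniformly in $\eps$ by \Cref{lemma:u_linfty}. Dropping the nonnegative term $\int_\Omega u_\eps^2(\cdot,t_1)$ yields
\[
	\int_{\frac{1}{2}t_0}^{t_1}\!\!\int_\Omega |\grad u_\eps|^2 \leq \int_\Omega u_\eps^2(\cdot,\tfrac{1}{2}t_0) + 2K_1\big(t_1 - \tfrac{1}{2}t_0\big) \sfed C(t_0,t_1)^2,
\]
and a fortiori $\|\grad u_\eps\|_{L^2(\Omega\times(t_0,t_1))} \leq C(t_0,t_1)$, which is the claim. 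I do not anticipate a genuine obstacle here: every ingredient (the testing identity, the Young absorption, the uniform $L^\infty$ bounds for $u_\eps$ and $\grad v_\eps$) is already in place, so the only care needed is to start the time integration strictly inside $(0,\infty)$ so that the uniform bounds of \Cref{lemma:u_linfty} and \Cref{lemma:uv_hoelder_1} apply — which is exactly why the constant is allowed to depend on both $t_0$ and $t_1$.
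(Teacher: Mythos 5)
Your proposal is correct and follows essentially the same route as the paper: testing the first equation with $u_\eps$, absorbing the drift term via Young's inequality using the uniform $L^\infty$ bounds on $u_\eps$ and $\grad v_\eps$ from \Cref{lemma:u_linfty} and \Cref{lemma:uv_hoelder_1}, and then integrating in time. The only (immaterial) difference is that the paper integrates from $t_0$ rather than $\tfrac{1}{2}t_0$.
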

\begin{proof}
	Fix $t_0 > 0$.
	According to \Cref{lemma:uv_hoelder_1}, there exists $K > 0$ such that
	\begin{equation}\label{eq:baseline_bounds_for_grad}
		\|u_\eps(\cdot, t)\|_\L{\infty} \leq K \stext{and} \|\grad v_\eps(\cdot, t)\|_\L{\infty} \leq K
	\end{equation}
	for all $t \geq t_0$ and $\eps \in (0,1)$.
	Testing the first equation in (\ref{approx_system}) with $u_\eps$ then directly yields
	\begin{align*}
	\frac{1}{2} \frac{\d}{\d t} \int_\Omega u_\eps^2 =& -\int_\Omega |\grad u_\eps|^2 + \chi \int_\Omega u_\eps \grad u_\eps \cdot \grad v_\eps \\
	&\leq -\frac{1}{2}\int_\Omega |\grad u_\eps|^2 + \frac{1}{2}|\chi|^2\int_\Omega u_\eps^2 |\grad v_\eps|^2\\
	&\leq -\frac{1}{2}\int_\Omega |\grad u_\eps|^2 + \frac{1}{2}|\chi|^2|\Omega| K^4
	\end{align*}
	for all $t > t_0$ and $\eps \in (0,1)$. A straightforward time integration of the above combined with (\ref{eq:baseline_bounds_for_grad}) completes the proof.
\end{proof}\noindent
As the last important a priori bound of this section, we again use standard elliptic regularity theory to provide a stronger uniform Hölder bound for the approximate solution components $v_\eps$. This will allow us to later use the fact that all approximate solution components $v_\eps$ are classical solutions of the second equation in (\ref{approx_system}) to argue that their limits are in fact classical solutions of the second equation in (\ref{problem}). One key idea here is to apply said elliptic regularity theory not only to the functions $v_\eps$ themselves but also to the difference functions $v_\eps(\cdot, t) - v_\eps(\cdot, s)$ and use the already established parabolic Hölder bounds for $u_\eps$ to gain time Hölder bounds from regularity theory that is originally only interested in the space variable.
\begin{lemma} \label{lemma:v_hoelder_2}
	For each $t_0 > 0$, there exist $C \equiv C(t_0) > 0$ and $\alpha \equiv \alpha(t_0) \in (0,1)$ such that
	\begin{equation*}
	\|v_\eps\|_{C^{\frac{\alpha}{2}}([t_0,t_1]; C^{2+\alpha}(\overline{\Omega}))} \leq C 
	\end{equation*}
	for all $t_1 > t_0$ and $\eps \in (0,1)$.
\end{lemma}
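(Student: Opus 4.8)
The plan is to obtain the claimed bound by applying standard Schauder-type elliptic estimates to the second equation in (\ref{approx_system}), viewed with frozen time, and then upgrading to a Hölder bound in time by applying the same estimate to difference quotients $v_\eps(\cdot,t)-v_\eps(\cdot,s)$. Fix $t_0>0$. The crucial input is \Cref{corollary:u_hoelder_1}, which for each $t_1>t_0$ gives a uniform bound on $\|\frac{u_\eps}{1+\eps u_\eps}\|_{C^{\alpha,\alpha/2}(\overline\Omega\times[\frac{1}{2}t_0,t_1])}$ for some $\alpha\in(0,1)$ independent of $\eps$; I will in fact work on $[\tfrac12 t_0,\infty)$ so that the time-difference estimate below can be localized.

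First I would apply interior-and-boundary Schauder estimates (elliptic regularity in Hölder spaces for $-\Delta+1$ with Neumann boundary conditions on the smooth domain $\Omega$; cf.\ the references already used for $W^{2,p}$ theory, now in the Hölder scale) to the equation $0=\Delta v_\eps-v_\eps+\frac{u_\eps}{1+\eps u_\eps}$ at each fixed $t\ge t_0$. Since the right-hand side $\frac{u_\eps(\cdot,t)}{1+\eps u_\eps(\cdot,t)}$ is bounded in $C^\alpha(\overline\Omega)$ uniformly in $t\ge\tfrac12 t_0$ and $\eps\in(0,1)$ by \Cref{corollary:u_hoelder_1}, this yields a constant $K_1>0$ with $\|v_\eps(\cdot,t)\|_{C^{2+\alpha}(\overline\Omega)}\le K_1$ for all $t\ge t_0$ and $\eps\in(0,1)$. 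This already controls $\sup_{t\in[t_0,t_1]}\|v_\eps(\cdot,t)\|_{C^{2+\alpha}(\overline\Omega)}$.

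Next I would produce the Hölder modulus in $t$. For $t,s\in[t_0,t_1]$ the difference $w\defs v_\eps(\cdot,t)-v_\eps(\cdot,s)$ solves, again classically and with homogeneous Neumann data, the elliptic problem $0=\Delta w-w+g$ with $g\defs\frac{u_\eps(\cdot,t)}{1+\eps u_\eps(\cdot,t)}-\frac{u_\eps(\cdot,s)}{1+\eps u_\eps(\cdot,s)}$. By \Cref{corollary:u_hoelder_1} the function $g$ satisfies $\|g\|_{C^\alpha(\overline\Omega)}\le K_2\,|t-s|^{\alpha/2}$ whenever $|t-s|<1$ — the $C^0$ part is immediate from the $\tfrac{\alpha}{2}$-Hölder continuity in time, and the $\alpha$-Hölder seminorm in $x$ is bounded by $2$ times the (already bounded) $C^{\alpha,\alpha/2}$-norm, which after a short interpolation argument against the $C^0$ bound again gives a factor $|t-s|^{\alpha'/2}$ for a possibly smaller exponent; to keep the statement clean one simply relabels $\alpha$ so that a single exponent works. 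Applying the same Schauder estimate to $w$ then gives $\|v_\eps(\cdot,t)-v_\eps(\cdot,s)\|_{C^{2+\alpha}(\overline\Omega)}\le K_3\,|t-s|^{\alpha/2}$ for $|t-s|<1$, and for $|t-s|\ge1$ the trivial bound $\|w\|_{C^{2+\alpha}(\overline\Omega)}\le 2K_1\le 2K_1|t-s|^{\alpha/2}$ closes the gap. Combining the pointwise-in-time bound with this time-Hölder bound yields $\|v_\eps\|_{C^{\alpha/2}([t_0,t_1];C^{2+\alpha}(\overline\Omega))}\le C(t_0)$ uniformly in $t_1>t_0$ and $\eps\in(0,1)$, after possibly shrinking $\alpha$ once more so that all the exponents above coincide.

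The main obstacle is bookkeeping rather than a genuine difficulty: one must make sure the Schauder constants depend only on $\Omega$, on the ellipticity of $-\Delta+1$, and on the Hölder exponent — hence not on $t$, $t_1$, or $\eps$ — and one must reconcile the various Hölder exponents produced (the one from \Cref{corollary:u_hoelder_1}, the one lost in the interpolation step used to extract the $|t-s|^{\alpha/2}$ factor from the mixed $C^{\alpha,\alpha/2}$-norm, and the one delivered by the elliptic estimate) by simply passing to their minimum. Everything else is a direct application of results already quoted in the excerpt.
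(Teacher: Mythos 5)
Your proposal is correct and follows essentially the same route as the paper: apply elliptic Schauder estimates for $-\laplace+1$ with Neumann data to $v_\eps(\cdot,t)$ at frozen times and then to the differences $v_\eps(\cdot,t)-v_\eps(\cdot,s)$, using \Cref{corollary:u_hoelder_1} to control the source terms. If anything, you are slightly more careful than the paper in noting that passing from the parabolic bound in $C^{\alpha,\alpha/2}(\overline\Omega\times[t_0,t_1])$ to an estimate of the form $\|g\|_{C^{\alpha}(\overline\Omega)}\le K|t-s|^{\alpha/2}$ for the difference of source terms requires an interpolation of Hölder seminorms and a harmless shrinking of $\alpha$, a step the paper leaves implicit.
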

\begin{proof}
	Fix $t_0 > 0$. Due to \Cref{corollary:u_hoelder_1}, we can then fix $K_1 > 0$ and $\alpha \in (0,1)$ such that
	\[
		\left\|\frac{u_\eps}{1+\eps u_\eps}\right\|_{C^{\frac{\alpha}{2}} ([t_0, t_1]; C^\alpha(\overline{\Omega}))} \leq K_1
	\]
	for all $t_1 > t_0$ and $\eps \in (0,1)$.
	\\[0.5em]
	If we now employ the elliptic regularity theory found in Theorem 3.1 of \cite[p.135]{LUElliptic} combined with (\ref{eq:v_eps_leq_u_eps}), we immediately gain $K_2 > 0$ such that
	\begin{align*}
		\|v_\eps(\cdot, t)\|_{C^{2+\alpha}(\overline{\Omega})} &\leq K_2 \left\|\frac{u_\eps(\cdot,t)}{1+\eps u_\eps(\cdot, t)}\right\|_{C^\alpha(\overline{\Omega})} \leq K_1 K_2 \label{eq:v_hoelder_1} \numberthis 
	\end{align*}
	for all $t > t_0$ and $\eps \in (0,1)$. Note further that
	\[
		0 = \laplace (v_\eps(x, t) - v_\eps(x, s)) - (v_\eps(x, t) - v_\eps(x, s)) + \frac{u_\eps(\cdot, t)}{1+\eps u_\eps(\cdot, t)} - \frac{u_\eps(\cdot, s)}{1+\eps u_\eps(\cdot, s)} \;\;\;\; \text{ for all } x \in \Omega
	\]
	and
	\[	
		\grad (v_\eps(x, t) - v_\eps(x, s)) \cdot \nu = 0 \;\;\;\; \text{ for all } x \in \partial\Omega
	\]
	for all $s,t \in [t_0,\infty)$ and $\eps \in (0,1)$. This makes the difference function $v_\eps(\cdot, t) - v_\eps(\cdot, s)$ accessible to the same elliptic regularity theory from \cite{LUElliptic} and a similar argument as the one used to derive (\ref{eq:v_eps_leq_u_eps}). Therefore, we can find $K_3 > 0$ such that
	\begin{align*} 
		\|v_\eps(\cdot, t) - v_\eps(\cdot, s)\|_{C^{2+\alpha}(\overline{\Omega})} &\leq K_3\left\|\frac{u_\eps(\cdot, t)}{1+\eps u_\eps(\cdot, t)} - \frac{u_\eps(\cdot, s)}{1+\eps u_\eps(\cdot, s)}\right\|_{C^{\alpha}(\overline{\Omega})} 
		\leq K_1 K_3 (t-s)^\frac{\alpha}{2} \numberthis \label{eq:v_hoelder_2}
	\end{align*}
	for all $t,s \in [t_0,\infty)$ and $\eps \in (0,1)$. Combining (\ref{eq:v_hoelder_1}) with (\ref{eq:v_hoelder_2}) then yields our desired result.
\end{proof}

\section{Construction of our solution candidates $(u,v)$ as limits of the approximate solutions}
The target of this section will be to construct solution candidates $(u,v)$, which solve (\ref{problem}) and adhere to the boundary conditions (\ref{boundary_conditions}), by using the a priori estimates from the previous section. As said estimates are all gained in a fashion that was initial data independent by necessity, the construction in this section will by itself not provide us with any type of continuity in $t = 0$. This issue will instead be addressed in the section directly following.
\\[0.5em]
We begin by using the compact embedding properties of various function spaces combined with the bounds derived in the previous section to construct our solution candidates as limits of the approximate solutions by multiple subsequence extraction and diagonal sequence arguments.
\begin{lemma}\label{lemma:construction}
	There exist a null sequence $(\eps_j)_{j\in\N}\subseteq (0,1)$, functions $u,v: \overline{\Omega}\times(0,\infty) \rightarrow [0,\infty)$ and, for each $t_1 > t_0 > 0$, there exists $\alpha \equiv \alpha(t_0,t_1) \in (0,1)$ such that
	\begin{align*}
		u_\eps &\rightarrow u && \text{ in } C^{\alpha, \frac{\alpha}{2}}(\overline{\Omega}\times[t_0,t_1]),\numberthis \label{eq:u_hoelder_approx} \\
		u_\eps &\rightharpoonup u && \text{ in } L^2((t_0,t_1);W^{1,2}(\Omega)) \text{ and } \numberthis \label{eq:sobolev_approx} \\
		v_\eps &\rightarrow v && \text{ in } C^\frac{\alpha}{2}([t_0,t_1]; C^{2+\alpha}(\overline{\Omega})) \numberthis \label{eq:v_hoelder_approx}
	\end{align*}
	as $\eps = \eps_j \searrow 0$.
\end{lemma}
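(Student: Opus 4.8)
The plan is to construct $(u,v)$ via a standard compactness and diagonalization procedure, extracting a single null sequence $(\eps_j)_{j\in\N}$ along which the required convergences hold. First I would set up a countable exhaustion of $(0,\infty)$ by intervals, say $I_k \defs [1/k, k]$ for $k \in \N$, so that every compact subinterval $[t_0, t_1] \subseteq (0,\infty)$ is eventually contained in some $I_k$. The goal is then to find, for each fixed $k$, a subsequence along which all three convergences hold on $I_k$, and then to diagonalize over $k$ to obtain one sequence that works on every $I_k$ and hence on every $[t_0,t_1]$.

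For a fixed $k$, the three convergences come from three compactness mechanisms, all fed by the uniform bounds of the previous section. For \eqref{eq:u_hoelder_approx}: by \Cref{lemma:uv_hoelder_1} the family $(u_\eps)_{\eps\in(0,1)}$ is bounded in $C^{\alpha_0,\alpha_0/2}(\overline\Omega\times I_k)$ for some $\alpha_0 = \alpha_0(1/k) \in (0,1)$; since $C^{\alpha_0,\alpha_0/2}(\overline\Omega\times I_k)$ embeds compactly into $C^{\alpha,\alpha/2}(\overline\Omega\times I_k)$ for any $\alpha < \alpha_0$ (Arzel\`a--Ascoli together with interpolation of H\"older norms), a subsequence converges in $C^{\alpha,\alpha/2}(\overline\Omega\times I_k)$ to some limit, which we call $u$ on $I_k$; nonnegativity of $u$ is inherited from the nonnegativity of the $u_\eps$. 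For \eqref{eq:sobolev_approx}: by \Cref{lemma:grad_u_bound} together with the $L^\infty$ bound of \Cref{lemma:u_linfty}, the family is bounded in $L^2(I_k; W^{1,2}(\Omega))$, a reflexive space, so a further subsequence converges weakly there; the weak limit must coincide with the $u$ already obtained because strong $C^0$ convergence already forces convergence in $L^2(\Omega\times I_k)$ and weak limits are unique. For \eqref{eq:v_hoelder_approx}: by \Cref{lemma:v_hoelder_2} the family $(v_\eps)$ is bounded in $C^{\alpha_0/2}(I_k; C^{2+\alpha_0}(\overline\Omega))$, which embeds compactly into $C^{\alpha/2}(I_k; C^{2+\alpha}(\overline\Omega))$ for $\alpha < \alpha_0$ — again Arzel\`a--Ascoli in the $t$ variable combined with the compact embedding $C^{2+\alpha_0}(\overline\Omega)\hookrightarrow C^{2+\alpha}(\overline\Omega)$ — so a further subsequence converges to some $v$, nonnegative as a limit of nonnegative functions.

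After performing these three successive extractions on $I_1$, then refining on $I_2$, and so on, the diagonal sequence $\eps_j \defs$ (the $j$-th member of the $j$-th nested subsequence) has the property that for every $k$ it is eventually a subsequence of the sequence chosen for $I_k$, hence all three convergences hold on each $I_k$. One must check the limits are consistent across different $k$, but this is automatic since convergence on a larger interval restricts to convergence on a smaller one and limits in these function spaces are unique; thus $u$ and $v$ are well-defined on all of $\overline\Omega\times(0,\infty)$ with values in $[0,\infty)$. Finally, given an arbitrary $t_1 > t_0 > 0$, pick $k$ with $[t_0,t_1]\subseteq I_k$ and set $\alpha(t_0,t_1)$ to be (any value strictly below) the exponent $\alpha_0(1/k)$ produced above; restricting the $I_k$-convergences to $[t_0,t_1]$ gives exactly \eqref{eq:u_hoelder_approx}--\eqref{eq:v_hoelder_approx}.

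The only mild subtlety — not really an obstacle — is bookkeeping: making sure the single diagonal sequence simultaneously delivers all three modes of convergence and that the H\"older exponent is handled cleanly (the exponent from \Cref{lemma:uv_hoelder_1}, \Cref{lemma:v_hoelder_2} depends only on $t_0$, so on $I_k$ it depends only on $k$, and passing to a slightly smaller $\alpha$ to gain compactness costs nothing). Everything else is a routine application of Arzel\`a--Ascoli, the compact embeddings of H\"older spaces, reflexivity of $L^2(I_k;W^{1,2}(\Omega))$, and uniqueness of limits; no new estimate is needed beyond those already established in \Cref{lemma:u_linfty}, \Cref{lemma:uv_hoelder_1}, \Cref{lemma:grad_u_bound} and \Cref{lemma:v_hoelder_2}.
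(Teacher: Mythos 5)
Your proposal is correct and follows essentially the same route as the paper: exhaust $(0,\infty)$ by compact intervals $[1/k,k]$, use the uniform bounds from \Cref{lemma:u_linfty}, \Cref{lemma:uv_hoelder_1}, \Cref{lemma:grad_u_bound}, and \Cref{lemma:v_hoelder_2} together with compact Hölder embeddings and reflexivity of $L^2(I_k;W^{1,2}(\Omega))$, extract nested subsequences, diagonalize, and identify the weak $L^2$ limit with the Hölder limit via uniqueness. The only cosmetic difference is the order in which the three extractions are interleaved, which does not affect the argument.
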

\begin{proof}
	Due to \Cref{lemma:uv_hoelder_1}, there exist $\beta_k \in (0,1)$ and $K_k > 0$ for $k \in \N$ such that 
	\[
		\|u_\eps\|_{C^{\beta_k, \frac{\beta_k}{2}}(\overline{\Omega}\times[\frac{1}{k}, k])} \leq K_k \;\;\;\; \text{ for all } k \in \N.
	\]
	Due to the compact embedding properties of Hölder spaces this implies that, for each $k \in \N$, there exist $\alpha_k \in (0,1)$, functions $u_k: \overline{\Omega}\times [\frac{1}{k}, k] \rightarrow [0,\infty)$ and sequences $(\eps_j^{(k)})_{j\in\N} \subseteq (0,1)$ such that $\eps^{(k)}_j \searrow 0$ as $j \rightarrow \infty$,  $(\eps_j^{(k+1)})_{j\in\N}$ is a subsequence of $(\eps_j^{(k)})_{j\in\N}$ and 
	\[
		u_\eps \rightarrow u_k \text{ in } C^{\alpha_k, \frac{\alpha_k}{2}}(\overline{\Omega}\times[\tfrac{1}{k}, k]) \text{ as } \eps = \eps^{(k)}_j \searrow 0.
	\]
	These sequences are gained by multiple subsequence extraction arguments. Due to the uniqueness of pointwise limits, this implies that $u_{k+1}\vert_{[\frac{1}{k}, k]} = u_k$ for all $k \in \N$. Therefore, we can define $u$ as 
	\begin{equation}\label{eq:u_construction}
		u(x,t) \defs u_k(x,t) \;\;\;\; \text{ for all } (x,t) \in \overline{\Omega}\times(0,\infty) \text{ and some } k\in\N \text{ such that } t \in [\tfrac{1}{k}, k] 
	\end{equation}
	in well-defined fashion.
	If we now set $\eps_j \defs \eps^{(j)}_j$, $j \in \N$, we gain our first desired convergence property (\ref{eq:u_hoelder_approx}) using a standard diagonal sequence argument.
	\\[0.5em]
	By essentially the same argument as above, but this time derived from the bound established in \Cref{lemma:v_hoelder_2}, we can now construct a function $v: \overline{\Omega}\times(0,\infty) \rightarrow [0,\infty)$ and extract another subsequence from the previous one, along of which the convergence property (\ref{eq:v_hoelder_approx}) holds.
	\\[0.5em]
	Finally, \Cref{lemma:u_linfty} and \Cref{lemma:grad_u_bound} allow us to extract a final subsequence with the property (\ref{eq:sobolev_approx}) by using compactness properties (relative to the weak topology) of bounded sets in the spaces $L^2((\frac{1}{k},k);W^{1,2}(\Omega))$, $k\in\N$, combined with a similar diagonal sequence argument. Note hereby that the convergence property (\ref{eq:u_hoelder_approx}) derived above ensures that all these weak limits coincide with the limit function $u$ from (\ref{eq:u_construction}).
\end{proof}
\noindent
These convergence properties are now immediately good enough to ensure that $v$ is a $C^{2,0}(\overline{\Omega}\times(0,\infty))$ classical solution of the second equation in (\ref{problem}) with Neumann boundary conditions, but are insufficient to derive the same solution property for $u$ and the first equation in (\ref{problem}). Instead, we will first show that $u$ is a weak solution compatible with well-known parabolic theory from \cite{LSU} and then use said theory to argue that it was already classical. This is done by using the fact that such weak solutions are indeed unique and the fact that due to the high regularity of $v$, the associated classical problem always has a sufficiently smooth solution. \\[0.5em]
We chose this approach instead of first providing stronger a priori estimates with similar parabolic regularity theory for the approximate solutions, which would yield stronger convergence properties and therefore that $u$ is immediately a classical solution, because it is rather more involved to gain the necessary uniform bounds in an $\eps$ independent fashion from said parabolic theory as opposed to the mere fact that the weak solutions considered here are indeed classical.

\begin{lemma}\label{lemma:uv_are_actually_solutions}
	The functions $u,v$ constructed in \Cref{lemma:construction} have the properties
	\begin{align*}
		u \in C^{2,1}(\overline{\Omega}\times(0,\infty)), \;\;\;\;
		v \in C^{2,0}(\overline{\Omega}\times(0,\infty))
	\end{align*}
	and satisfy (\ref{problem}) on $\Omega\times(0,\infty)$ and the boundary conditions (\ref{boundary_conditions}).
\end{lemma}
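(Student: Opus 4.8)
The plan is to verify each regularity and equation claim in turn, starting with the easier elliptic component. First I would show $v \in C^{2,0}(\overline{\Omega}\times(0,\infty))$ and that it solves $0 = \laplace v - v + u$ with Neumann boundary data. This follows almost immediately from \Cref{lemma:construction}: the convergence $v_\eps \to v$ in $C^{\frac{\alpha}{2}}([t_0,t_1];C^{2+\alpha}(\overline{\Omega}))$ for every $t_1 > t_0 > 0$ gives $v(\cdot,t) \in C^{2+\alpha}(\overline{\Omega}) \subseteq C^2(\overline{\Omega})$ and continuity of $t \mapsto v(\cdot,t)$ into $C^2(\overline{\Omega})$, hence $v \in C^{2,0}(\overline{\Omega}\times(0,\infty))$. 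Passing to the limit $\eps = \eps_j \searrow 0$ in the second equation of (\ref{approx_system}), using that $u_\eps \to u$ uniformly on $\overline{\Omega}\times[t_0,t_1]$ and $\frac{u_\eps}{1+\eps u_\eps} \to u$ pointwise (indeed uniformly, since $|\frac{u_\eps}{1+\eps u_\eps} - u_\eps| \leq \eps u_\eps^2 \to 0$ uniformly by the $L^\infty$ bound from \Cref{lemma:u_linfty}), yields $\laplace v - v + u = 0$ classically; the boundary condition survives because $\grad v_\eps \cdot \nu = 0$ and the convergence is in $C^2$ up to the boundary.

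Next I would treat $u$. The delicate point is that the convergence properties in \Cref{lemma:construction} — Hölder convergence of $u_\eps$ and weak $W^{1,2}$ convergence — are not by themselves enough to pass to the limit classically in the first PDE, since that equation involves $\div(u_\eps \grad v_\eps)$ and hence (after expansion) second derivatives of $u_\eps$ that we have no uniform control over. The route, as the paragraph preceding the lemma indicates, is to first establish that $u$ is a \emph{weak} solution of
\[
	u_t = \laplace u - \chi \div(u \grad v) \stext{on} \Omega\times(0,\infty)
\]
with Neumann boundary conditions and with the already-constructed $v$ as a given, fixed coefficient field. Concretely: for test functions $\psi \in C^\infty(\overline{\Omega}\times[t_0,t_1])$, write the weak formulation of the first equation of (\ref{approx_system}),
\[
	-\int_{t_0}^{t_1}\!\!\int_\Omega u_\eps \psi_t + \int_\Omega u_\eps \psi \Big|_{t_0}^{t_1} = -\int_{t_0}^{t_1}\!\!\int_\Omega \grad u_\eps \cdot \grad \psi + \chi \int_{t_0}^{t_1}\!\!\int_\Omega u_\eps \grad v_\eps \cdot \grad \psi,
\]
and pass to the limit term by term: the time-integral and boundary terms on the left converge by the uniform (Hölder, in particular uniform) convergence $u_\eps \to u$; the $\grad u_\eps \cdot \grad \psi$ term converges by the weak $W^{1,2}$ convergence (\ref{eq:sobolev_approx}); and the taxis term converges because $u_\eps \to u$ uniformly while $\grad v_\eps \to \grad v$ uniformly (from (\ref{eq:v_hoelder_approx})), so $u_\eps \grad v_\eps \to u \grad v$ uniformly. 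This identifies $u$ as a bounded weak solution in the sense of the Ladyzhenskaya--Solonnikov--Uraltseva framework from \cite{LSU} for the linear parabolic problem with coefficient $b \defs -\chi\grad v$, which by \Cref{lemma:v_hoelder_2} is Hölder continuous in $(x,t)$ on every $\overline{\Omega}\times[t_0,t_1]$.

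The final step is a bootstrap/uniqueness argument: the \emph{linear} parabolic initial-boundary value problem $w_t = \laplace w - \chi\div(w\grad v)$ on $\Omega\times(t_0,t_1)$ with Neumann data and initial datum $w(\cdot,t_0) = u(\cdot,t_0) \in C^\alpha(\overline{\Omega})$ has, by the Schauder theory in \cite{LSU} (applicable since $\grad v \in C^{\alpha,\alpha/2}$ and, after one elliptic differentiation of the second equation using $u \in C^{\alpha,\alpha/2}$, one even gets enough regularity of the coefficients), a unique classical solution; since $u$ is a weak solution of exactly this problem with the same data and weak solutions in this class are unique, $u$ coincides with that classical solution on $(t_0,t_1)$. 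As $t_0,t_1$ were arbitrary and the construction is consistent (the limit $u$ is the same on overlapping intervals), this gives $u \in C^{2,1}(\overline{\Omega}\times(0,\infty))$ solving the first equation of (\ref{problem}) classically with $\grad u \cdot \nu = 0$ on $\partial\Omega$. Together with the $v$-part above, this is the claim. The main obstacle I anticipate is the bootstrapping of coefficient regularity: to invoke interior-and-boundary Schauder estimates for $u$ one needs the coefficient $\div(w\grad v) = \grad w\cdot\grad v + w\laplace v$ to make sense and be Hölder, which requires knowing $\laplace v = v - u \in C^{\alpha,\alpha/2}$ — this is exactly what \Cref{lemma:v_hoelder_2} and \Cref{lemma:uv_hoelder_1} are set up to provide, but assembling the precise statement of the linear theory (and its uniqueness clause) from \cite{LSU} in a way that matches the regularity actually available is the part requiring care.
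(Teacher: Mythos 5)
Your proposal is correct and follows essentially the same route as the paper: pass to the limit in the elliptic equation using the strong $C^{2+\alpha}$ convergence of $v_\eps$, identify $u$ as a weak solution by passing to the limit term by term in the weak formulation (using the Hölder convergence, the weak $W^{1,2}$ convergence, and the uniform convergence of $u_\eps\grad v_\eps$), and then upgrade to a classical solution via the uniqueness of weak solutions and the existence of classical solutions for the linear problem in the Ladyzhenskaya--Solonnikov--Uraltseva framework. The coefficient-regularity point you flag at the end is exactly what the paper's \Cref{lemma:v_hoelder_2} is designed to supply.
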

\begin{proof}
	Let $u,v$ be the functions and $(\eps_j)_{j\in\N}$ be the sequence constructed in \Cref{lemma:construction}.
	\\[0.5em]
	That $v \in C^{2,0}(\overline{\Omega}\times(0,\infty))$ is immediately obvious from (\ref{eq:v_hoelder_approx}). Further, all $v_\eps$ satisfy the second equation in (\ref{approx_system}), which is apart from one term very similar to the second equation in (\ref{problem}), and have Neumann boundary conditions. Therefore, the same convergence property directly allows us to conclude that $v$ in fact satisfies the second equation in (\ref{problem}) on $\Omega\times(0,\infty)$ and has Neumann boundary conditions due to the convergence property (\ref{eq:u_hoelder_approx}) ensuring pointwise convergence of $u_\eps$ to $u$ on $\Omega\times(0,\infty)$ and the fact that $\frac{x}{1+\eps x} \rightarrow x$ as $\eps \searrow 0$ for all $x \in [0,\infty)$.
	\\[0.5em]
	For the function $u$, we first argue that it is in fact a weak solution of the first equation in (\ref{problem}) on $\Omega\times(t_0,\infty)$ for all $t_0 > 0$ in the sense that $u \in C(\overline{\Omega}\times[t_0, \infty))\cap L_\loc^2([t_0, \infty);W^{1,2}(\Omega))$ and $u$ fulfills 
	\begin{equation}\label{eq:weak_solution_formulation}
		\int_{t_0}^\infty \int_\Omega u \phi_t + \int_\Omega u(\cdot, t_0)\phi(\cdot, t_0) = \int_{t_0}^\infty \int_\Omega \grad u \cdot \grad \phi - \chi \int_{t_0}^\infty \int_\Omega u \grad v \cdot \grad \phi
	\end{equation}
	for all $\phi \in C^\infty(\overline{\Omega}\times[t_0, \infty))$ with compact support and $t_0 > 0$. For this purpose, we now fix $t_0 > 0$ and then notice that the convergence properties (\ref{eq:u_hoelder_approx}) and (\ref{eq:sobolev_approx}) already ensure that $u \in C(\overline{\Omega}\times[t_0, \infty))\cap L_\loc^2([t_0, \infty);W^{1,2}(\Omega))$. We observe further that the approximate solutions $u_\eps$ already are weak solutions of the above type as is easily seen by partial integration. Thus, we now only need to prove that this solution property survives taking the limit $\eps = \eps_j \searrow 0$. For the first, second and fourth integral term in (\ref{eq:weak_solution_formulation}), this is immediately ensured by the convergence properties (\ref{eq:u_hoelder_approx}) and (\ref{eq:v_hoelder_approx}). The remaining third integral term converges as desired due to (\ref{eq:sobolev_approx}).
	\\[0.5em]
	As our final step of this proof, we will now rely on some well-known parabolic regularity results from \cite{LSU} to show that the weak solution formulation above combined with the already known regularity properties for $u$ and $v$ already imply that $u$ is a classical solution to the corresponding partial differential equation. As (\ref{eq:v_hoelder_approx}) already implies that the coefficients of the operator $\mathcal{L}(u,\grad u, \laplace u) = -\laplace u + \chi \grad u \cdot \grad v + \chi u \laplace v$ are in fact elements of $C^{\alpha, \frac{\alpha}{2}}(\overline{\Omega} \times [t_0, t_1])$ for all $t_1 > t_0$ and some $\alpha \equiv \alpha(t_0,t_1) \in (0,1)$, a combination of Theorem 5.1 from \cite[p.170]{LSU}, which ensures uniqueness of such weak solutions, Theorem 5.3 from \cite[p.320]{LSU}, which ensures the existence of higher regularity classical solutions to this problem, and a standard cut-off argument, which helps deal with the missing initial data regularity, then grants us the remaining desired results.
\end{proof}

\section{$\Mp$-valued continuity of $u$ in $t = 0$} \label{section:inital_data_regularity}

After having now constructed our solution candidate $(u,v)$ in \Cref{lemma:construction} and shown that it is already a classical solution to (\ref{problem}) on $\Omega\times(0,\infty)$ with Neumann boundary conditions, we now only need to argue that said solution candidate is related to the initial data in a sensible way (as it would be very easy to construct solutions that are not, e.g.\ certain constant ones). 
\\[0.5em]
Because our initial data are in many cases only measure-valued, the way we want to do this is to show that $u$ converges to the initial data $\idata$ in the vague topology on $\Mp$ as $t\searrow 0$, meaning that it is continuous in $t=0$ in said topology with a value of $\mu$ at $t = 0$. 
We will do this by showing that the first components of the approximate solutions in a sense already are uniformly continuous in $t = 0$ in the vague topology, which can then be easily translated to a similar continuity property for the limit function $u$. We do this by proving that terms of the form $
	\int_\Omega u_\eps(\cdot, t)\phi - \int_\Omega u_\eps(\cdot, 0)\phi =  \int_0^t \int_\Omega u_{\eps t} \phi
$ tend uniformly to $0$ as $t \searrow 0$. 
For this, we first prepare a lemma, which proves a similar uniform convergence result for the critical term occurring in our later arguments treating the $\int_0^t \int_\Omega u_{\eps t} \phi$ term. 
\\[0.5em]
Note further that it is here, where the last remaining unused assumption in (\ref{scenario:2d+repulsion})--(\ref{scenario:3d+repulsion}), namely the higher initial data regularity in the three dimensional case, comes into play. In three dimensions, it is necessary to have a better than $\L{\frac{3}{2}}$-type uniform bound for $\grad v_\eps$, which is just about not provided by \Cref{lemma:starting_point}, but available to us for slightly better initial data regularity. Everything before this point was indeed possible for measure-valued initial data.
\begin{lemma}\label{lemma:ugradv_integrability}
For each $\delta > 0$, there exists $t_0 \equiv t_0(\delta) > 0$ such that
\[
	\int_0^t \|u_\eps(\cdot, s) \grad v_\eps(\cdot, s)\|_\L{1} \d s \leq \delta
\] 
for all $t \in (0,t_0)$ and $\eps \in (0,1)$.
\end{lemma}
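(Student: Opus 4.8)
The plan is to estimate the integrand by Hölder's inequality, writing
\[
	\|u_\eps(\cdot,s)\grad v_\eps(\cdot,s)\|_\L{1} \leq \|u_\eps(\cdot,s)\|_\L{r}\,\|\grad v_\eps(\cdot,s)\|_\L{r'}
\]
for a conveniently chosen exponent $r\in(1,\infty)$ with conjugate $r'$, in such a way that two things hold simultaneously: (i) $\|\grad v_\eps(\cdot,s)\|_\L{r'}$ is bounded by a constant independent of $s>0$ and $\eps\in(0,1)$, and (ii) \Cref{lemma:the_central_ineq} (or \Cref{corollary:the_central_ineq_attraction}) applied with exponent $p=r$ together with \Cref{lemma:ode_bound} yields $\int_\Omega u_\eps^r(\cdot,s)\leq Cs^{-\frac{n(r-1)}{2}}+C$ and hence
\[
	\|u_\eps(\cdot,s)\|_\L{r} \leq C\bigl(s^{-\theta}+1\bigr) \stext{for all} s>0 \text{ and } \eps\in(0,1),
\]
where $\theta\defs\tfrac{n(r-1)}{2r}$, noting that $\theta<1$ as soon as $r<\tfrac{n}{n-2}$ and automatically when $n=2$. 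Granting such an $r$, Hölder's inequality gives $\|u_\eps(\cdot,s)\grad v_\eps(\cdot,s)\|_\L{1}\leq C(s^{-\theta}+1)$ uniformly in $\eps$, and a time integration produces $\int_0^t\|u_\eps(\cdot,s)\grad v_\eps(\cdot,s)\|_\L{1}\d s\leq C\bigl(\tfrac{t^{1-\theta}}{1-\theta}+t\bigr)$, which tends to $0$ as $t\searrow0$; choosing $t_0\equiv t_0(\delta)>0$ small enough then proves the claim. It therefore only remains to exhibit a valid exponent $r$ in each scenario.

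For $n=2$ (scenarios (\ref{scenario:2d+repulsion}) and (\ref{scenario:2d+attraction})) we take $r=\tfrac52$. Then $r'=\tfrac53<2=\tfrac{n}{n-1}$, so \Cref{lemma:starting_point} supplies the uniform bound on $\|\grad v_\eps(\cdot,s)\|_\L{5/3}$ needed for (i), while the differential inequality for $\int_\Omega u_\eps^{5/2}$ in (ii) is provided by \Cref{lemma:the_central_ineq} in scenario (\ref{scenario:2d+repulsion}) and by \Cref{corollary:the_central_ineq_attraction} (which explicitly covers $p=\tfrac52$) in scenario (\ref{scenario:2d+attraction}); since $n=2$ the resulting exponent is $\theta=\tfrac35<1$, and the scheme above applies directly.

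For $n=3$ (scenario (\ref{scenario:3d+repulsion})) we have $\idata=f\in L^p(\Omega)$ with $p>1$, and we may assume $p\in(1,3)$ (shrinking $p$ if necessary, which is legitimate since $\Omega$ is bounded and (\ref{eq:better_approx}) then still holds). By (\ref{eq:better_approx}) the family $(u_{0,\eps})_{\eps\in(0,1)}$ is bounded in $L^p(\Omega)$; applying \Cref{lemma:the_central_ineq} with exponent $p$ and comparing the resulting differential inequality with a constant supersolution — the analogue of the small-data case in the proof of \Cref{lemma:ode_bound}, but keeping track of $\int_\Omega u_{0,\eps}^p$ — shows that $\|u_\eps(\cdot,t)\|_\L{p}$ is bounded uniformly in $t\geq0$ and $\eps\in(0,1)$. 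Standard elliptic regularity for $-\laplace+1$ applied to the second equation of (\ref{approx_system}), together with $\tfrac{u_\eps}{1+\eps u_\eps}\leq u_\eps$, then gives a uniform bound for $\|v_\eps(\cdot,t)\|_{W^{2,p}(\Omega)}$, and the Sobolev embedding $W^{2,p}(\Omega)\hookrightarrow W^{1,q_0}(\Omega)$ with $q_0\defs\tfrac{3p}{3-p}$ gives a uniform bound for $\|\grad v_\eps(\cdot,t)\|_\L{q_0}$. A short computation shows $q_0>\tfrac32$ exactly because $p>1$, so its conjugate $r_0\defs\tfrac{3p}{4p-3}$ lies in $(1,3)$, again using $p\in(1,3)$; in particular $r_0<\tfrac{n}{n-2}=3$, so the corresponding $\theta=\tfrac{3(r_0-1)}{2r_0}$ arising from \Cref{lemma:the_central_ineq} (with $p=r_0$) and \Cref{lemma:ode_bound} satisfies $\theta<1$. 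Taking $r=r_0$ and $r'=q_0$ in the scheme above then finishes the proof.

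The main obstacle is the three-dimensional case. Relying only on \Cref{lemma:starting_point}, $\grad v_\eps$ would be controlled in $\L{q}$ merely for $q<\tfrac{n}{n-1}=\tfrac32$, whose conjugate exceeds $3$, whereas the time-integrability in (ii) forces $r<3$; these two demands are irreconcilable, which is precisely why the two-dimensional cases go through for measure-valued data while the three-dimensional one does not. The extra $L^p$-regularity of the initial data assumed in (\ref{scenario:3d+repulsion}) breaks this deadlock: it propagates through the superlinear differential inequality to a uniform-in-time $L^p$-bound for $u_\eps$ and hence, via elliptic regularity, to a bound for $\grad v_\eps$ in $\L{q_0}$ with $q_0$ strictly above the critical value $\tfrac32$. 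Verifying that $q_0>\tfrac32$ and $r_0<3$ can be met simultaneously for every $p>1$ is the arithmetic heart of the argument; everything else is a routine Hölder estimate and time integration.
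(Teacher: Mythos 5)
Your proposal is correct and follows essentially the same route as the paper's proof: the Hölder split $\|u_\eps\grad v_\eps\|_\L{1}\leq\|u_\eps\|_\L{r}\|\grad v_\eps\|_\L{r'}$ with $r=\tfrac52$ in two dimensions, and in three dimensions the propagation of the uniform $L^p$ bound on $u_{0,\eps}$ through the superlinear differential inequality, followed by elliptic regularity and Sobolev embedding to control $\grad v_\eps$ above the critical exponent $\tfrac32$, then \Cref{lemma:ode_bound} and a time integration with $\theta<1$. The only cosmetic difference is that you work at the endpoint Sobolev exponent $q_0=\tfrac{3p}{3-p}$ while the paper picks an $r$ strictly inside $(\tfrac32,\tfrac{3p}{3-p})$; both are valid.
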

\begin{proof}
	Let us first treat the simpler case of $n = 2$. Then a combination of the Hölder inequality and \Cref{lemma:starting_point} gives us a constant $K_1 > 0$ such that
	\[
		\|u_\eps(\cdot, t) \grad v_\eps(\cdot, t)\|_\L{1} \leq K_1\|u_\eps(\cdot, t)\|_\L{\frac{5}{2}} = K_1 \left(\int_\Omega u_\eps^{5/2}(\cdot,t) \right)^{\frac{2}{5}},
	\]
	for all $t > 0$ and $\eps \in (0,1)$ because $\frac{5}{3} \leq 2 = \frac{n}{n-1}$. By use of \Cref{lemma:ode_bound}, \Cref{lemma:the_central_ineq} (for the case $\chi < 0$) and \Cref{corollary:the_central_ineq_attraction} (for the case $\chi > 0$), this can be improved to
	\[
		\|u_\eps(\cdot, t) \grad v_\eps(\cdot, t)\|_\L{1} \leq K_2 t^{\frac{2}{5}\frac{1}{1-\alpha}} + K_2 = K_2 t^{-\frac{3}{5}} + K_2
	\]
	with
	\[
		\alpha = 1 + \frac{2}{n(\tfrac{5}{2}-1)}= 1 + \frac{2}{3}
	\]
	for all $t > 0$, $\eps \in (0,1)$ and $K_2 > 0$ given by the prior results. As time integration then yields
	\[
			\int_0^t \|u_\eps(\cdot, s) \grad v_\eps(\cdot, s)\|_\L{1} \d s \leq \frac{5K_2}{2} t^\frac{2}{5} + K_2 t
	\]
	for all $t > 0$ and $\eps \in (0,1)$, this already implies our desired result by setting $t_0 \defs  \min(\frac{\delta}{2}\frac{1}{K_2}, (\frac{\delta}{2}\frac{2}{5K_2})^\frac{5}{2})$.
	\\[0.5em]
	We now treat the more challenging case of $n = 3$. Given that we are then in scenario (\ref{scenario:3d+repulsion}), we can assume that $\mu = f$ with $f \in L^p(\Omega)$ for some $p > 1$ and therefore there exists $K_3 > 0$ such that
	\[
		\|u_{0,\eps}\|_\L{p} \leq K_3
	\]
	for all $\eps \in (0,1)$ according to (\ref{eq:better_approx}). Due to standard embedding properties of Lebesgue spaces we can assume $p < 3$ without loss of generality. Integration of (\ref{eq:the_central_ineq}) from \Cref{lemma:the_central_ineq} then immediately yields $K_4 > 0$ such that
	\[
		\|u_\eps(\cdot,t)\|_\L{p} \leq K_4
	\]
	for all $t\in (0,1)$ and $\eps > 0$, which due to standard elliptic regularity theory (cf.\ \cite[Lemma 19.1]{FriedmanPDE}) combined with (\ref{eq:v_eps_leq_u_eps}) and Sobolev embedding theorems (cf.\ \cite{EllipticFunctionSpaces}) further gives us $K_5 > 0$ and $r \in (\frac{3}{2}, \frac{3p}{3-p})$ such that
	\[
		\|\grad v_\eps(\cdot, t)\|_\L{r} \leq K_5
	\]
	for all $t \in (0,1)$ and $\eps > 0$. By a similar application of the Hölder inequality as in the two-dimensional case, we then gain that
	\[
		\|u_\eps(\cdot, t) \grad v_\eps(\cdot, t)\|_\L{1} \leq K_5 \left(\int_\Omega u^q_\eps(\cdot,t) \right)^\frac{1}{q}
	\]
	with $q = \frac{r}{r - 1} < 3$ for all $t \in (0,1)$ and $\eps \in (0,1)$. Again by application of \Cref{lemma:the_central_ineq} combined with \Cref{lemma:ode_bound}, we gain $K_6 > 0$ such that
	\begin{equation}\label{eq:integration_2}
		\|u_\eps(\cdot,t)\grad v_\eps(\cdot,t)\|_\L{1} \leq K_6 t^{\frac{1}{q}\frac{1}{1-\beta}} + K_6 
	\end{equation}
	with 
	\[
		\beta = 1 + \frac{2}{n(q-1)}
	\]
	for all $t \in (0,1)$ and $\eps \in (0,1)$. Because this implies that
	\[
		\frac{1}{q}\frac{1}{1-\beta} = -\frac{n}{2}\frac{q-1}{q} = \frac{3}{2} \left(\frac{1}{q} - 1\right) > \frac{3}{2} \left(\frac{1}{3} - 1\right) = -1,
	\]
	the estimate (\ref{eq:integration_2}) is sufficient to complete the proof by a similar time integration argument as used in the two-dimensional case.
\end{proof}\noindent 
Given this, we can transition to the analysis of the critical $\int_0^t \int_\Omega u_{\eps t} \phi$ term, which is now pretty straightforward.
\begin{lemma}\label{lemma:u_time_diff_integrability}
	For each $\varphi \in C^2(\overline{\Omega})$ with $\grad \varphi \cdot \nu = 0$ on $\partial\Omega$ and $\delta > 0$, there exists $t_0 \equiv t_0(\delta,\varphi) > 0$ such that
	\[
		\left| \int_0^t \int_\Omega u_{\eps t} \varphi \right| < \delta
	\] 
	for all $t \in (0,t_0)$ and $\eps \in (0,1)$.
\end{lemma}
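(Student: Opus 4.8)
The starting point is the identity coming from the first equation in \eqref{approx_system}, namely
\[
	\int_0^t \int_\Omega u_{\eps t} \varphi = \int_0^t \int_\Omega \left( \laplace u_\eps - \chi \div(u_\eps \grad v_\eps) \right) \varphi ,
\]
valid for all $t > 0$ and $\eps \in (0,1)$, since $u_\eps$ is a classical solution. The plan is to integrate by parts twice in the first term and once in the second, using the Neumann conditions on both $u_\eps$ and $\varphi$ so that no boundary contributions appear, to obtain
\[
	\int_0^t \int_\Omega u_{\eps t} \varphi = \int_0^t \int_\Omega u_\eps \laplace \varphi + \chi \int_0^t \int_\Omega u_\eps \grad v_\eps \cdot \grad \varphi .
\]

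For the first term on the right, I would bound $|\laplace \varphi|$ by $\|\varphi\|_{C^2(\overline{\Omega})}$ and use the mass conservation property \eqref{eq:mass_conservation} together with \eqref{approx_mass_conservation}, which give $\int_\Omega u_\eps(\cdot, s) = m$ for all $s > 0$ uniformly in $\eps$; hence this term is bounded by $m \, \|\varphi\|_{C^2(\overline{\Omega})} \, t$, which tends to $0$ as $t \searrow 0$ uniformly in $\eps$. For the second term, I would bound $|\grad \varphi|$ by $\|\varphi\|_{C^1(\overline{\Omega})}$ and estimate
\[
	\left| \chi \int_0^t \int_\Omega u_\eps \grad v_\eps \cdot \grad \varphi \right| \leq |\chi| \, \|\varphi\|_{C^1(\overline{\Omega})} \int_0^t \|u_\eps(\cdot, s) \grad v_\eps(\cdot, s)\|_\L{1} \d s ,
\]
and this is exactly the quantity controlled by \Cref{lemma:ugradv_integrability}: given $\delta > 0$, that lemma supplies a $t_0$ making the time integral as small as we like, uniformly in $\eps$.

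Combining the two estimates, given $\delta > 0$ one chooses $t_0$ small enough that $m \, \|\varphi\|_{C^2(\overline{\Omega})} \, t_0 < \delta/2$ and, via \Cref{lemma:ugradv_integrability} applied with tolerance $\delta / (2|\chi| \|\varphi\|_{C^1(\overline{\Omega})} + 1)$, that the second term is below $\delta/2$ as well, which yields the claim for all $t \in (0, t_0)$ and all $\eps \in (0,1)$. There is no real obstacle here: the genuine difficulty has already been isolated and dispatched in \Cref{lemma:ugradv_integrability} (and, further upstream, in the central differential inequalities), so the only points requiring a little care are the boundary terms in the integration by parts — handled by the hypothesis $\grad \varphi \cdot \nu = 0$ on $\partial\Omega$ together with the Neumann condition on $u_\eps$ — and making sure every constant in the final choice of $t_0$ is independent of $\eps$, which it is since $m$, $\chi$ and $\varphi$ are all fixed.
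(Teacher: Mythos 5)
Your proposal is correct and follows essentially the same route as the paper: integrate by parts against $\varphi$ using the Neumann conditions, control the $\int_0^t\int_\Omega u_\eps\laplace\varphi$ term via mass conservation, and absorb the taxis term with \Cref{lemma:ugradv_integrability}. No gaps.
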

\begin{proof}
	Let $\varphi \in C^2(\overline{\Omega})$ with $\grad \varphi \cdot \nu = 0$ on $\partial\Omega$. We then test the first equation in (\ref{approx_system}) with $\varphi$ and use partial integration to see that
	\begin{align*}
		\left| \int_0^t \int_\Omega u_{\eps t} \varphi \right| &= \left| \int_0^t\int_\Omega u_\eps \laplace \varphi + \chi \int_0^t\int_\Omega u_\eps \grad v_\eps \cdot \grad \varphi \right| \\ 
		&\leq m\|\laplace \phi\|_\L{\infty} t + |\chi| \|\grad \varphi\|_\L{\infty} \int_0^t \|u_\eps(\cdot, s) \grad v_\eps(\cdot, s)\|_\L{1} \d s
	\end{align*}
	Due to \Cref{lemma:ugradv_integrability}, this already implies our desired result.
\end{proof}\noindent
Finally, due to already proving sufficiently strong convergence properties for the approximate solutions in \Cref{lemma:construction}, we now only need to show that the uniform continuity in $t = 0$ hinted at in the previous lemma translates to our solution candidates as proper continuity in $t = 0$ in the vague topology. We do this as follows:
\begin{lemma}\label{lemma:initial_data_continuity}
	The function $u$ constructed in \Cref{lemma:construction} has the following property:
	\[
		u(\cdot, t) \rightarrow \mu \;\;\;\; \text{ in } \Mp \text{ as } t\searrow 0.
	\]
\end{lemma}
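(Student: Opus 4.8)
The plan is to show convergence in the vague topology by testing against an arbitrary $\phi \in C(\overline{\Omega})$ and bounding $\left| \int_\Omega u(\cdot, t)\phi \d x - \int_{\overline{\Omega}} \phi \d\mu \right|$ via three contributions: the difference between $u(\cdot,t)$ and the approximate solution $u_{\eps}(\cdot,t)$ for small $\eps$, the uniform-in-$\eps$ smallness of $\int_\Omega (u_\eps(\cdot,t) - u_{0,\eps})\phi$ as $t \searrow 0$ coming from \Cref{lemma:u_time_diff_integrability}, and the convergence $u_{0,\eps} \to \mu$ from (\ref{approx_initial_data}). First I would reduce to test functions $\phi \in C^2(\overline{\Omega})$ with $\grad\phi\cdot\nu = 0$ on $\partial\Omega$: such functions are dense in $C(\overline{\Omega})$ with respect to the supremum norm (e.g.\ by a standard mollification and boundary-correction argument), and since the total masses $\int_\Omega u_\eps(\cdot,t) = m$ are uniformly bounded by (\ref{approx_mass_conservation}) and $\int_\Omega u(\cdot,t) \le m$ by the pointwise convergence (\ref{eq:u_hoelder_approx}) together with Fatou, a uniform approximation of $\phi$ transfers the estimate. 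Also $\mu(\overline{\Omega}) = m$, so all the masses in play are controlled by $m$.

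Next, fix such a $\phi$ and $\delta > 0$. By \Cref{lemma:u_time_diff_integrability} there is $t_0 = t_0(\delta,\phi) > 0$ such that $\left| \int_0^t \int_\Omega u_{\eps t}\phi \right| = \left| \int_\Omega u_\eps(\cdot,t)\phi - \int_\Omega u_{0,\eps}\phi \right| < \delta$ for all $t \in (0,t_0)$ and all $\eps \in (0,1)$; this is the crucial uniform-in-$\eps$ ingredient. By (\ref{approx_initial_data}) there is $\eps_0 > 0$ with $\left| \int_\Omega u_{0,\eps}\phi - \int_{\overline{\Omega}}\phi\d\mu \right| < \delta$ for all $\eps < \eps_0$. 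Now fix any $t \in (0,t_0)$. By the locally uniform convergence (\ref{eq:u_hoelder_approx}) on, say, $\overline{\Omega}\times[t/2, t]$, we have $u_{\eps_j}(\cdot,t) \to u(\cdot,t)$ uniformly on $\overline{\Omega}$ as $j \to \infty$, hence $\int_\Omega u_{\eps_j}(\cdot,t)\phi \to \int_\Omega u(\cdot,t)\phi$; so we may pick $j$ large enough that $\eps_j < \eps_0$ and $\left| \int_\Omega u(\cdot,t)\phi - \int_\Omega u_{\eps_j}(\cdot,t)\phi \right| < \delta$. Chaining the three estimates through this single well-chosen $\eps_j$ gives
\[
	\left| \int_\Omega u(\cdot,t)\phi \d x - \int_{\overline{\Omega}}\phi\d\mu \right| < 3\delta
\]
for every $t \in (0,t_0)$. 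Since $\delta > 0$ was arbitrary, $\int_\Omega u(\cdot,t)\phi \to \int_{\overline{\Omega}}\phi\d\mu$ as $t\searrow 0$ for all admissible $\phi$, and after the density reduction for all $\phi \in C(\overline{\Omega})$, which is exactly vague convergence $u(\cdot,t)\to\mu$ in $\Mp$.

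The main obstacle is the bookkeeping in the limit passage: \Cref{lemma:u_time_diff_integrability} controls $\int_\Omega u_\eps(\cdot,t)\phi$ for a fixed $\eps$, so one cannot simply "take $\eps \searrow 0$ first and then $t \searrow 0$" in a naive order; instead, for each fixed small $t$ one must choose the approximation index $\eps_j$ depending on $t$ (large enough that the Hölder convergence on $[t/2,t]$ has kicked in) while still respecting $\eps_j < \eps_0$ — the point being that $t_0$ and $\eps_0$ were chosen \emph{before} $t$, so this is consistent. A secondary technical point is justifying the reduction to smooth Neumann test functions and the uniform mass bound $\int_\Omega u(\cdot,t) \le m$, but both follow from (\ref{approx_mass_conservation}), (\ref{eq:u_hoelder_approx}) and Fatou's lemma without difficulty.
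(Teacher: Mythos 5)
Your proposal is correct and follows essentially the same route as the paper's proof: the same three-term triangle inequality through a $t$-dependent choice of $\eps_j$ (with $t_0$ and $\eps_0$ fixed first), the same reliance on \Cref{lemma:u_time_diff_integrability}, (\ref{approx_initial_data}) and (\ref{eq:u_hoelder_approx}), and the same closing density argument for smooth Neumann test functions. Your extra remark on using the uniform mass bounds to justify the density reduction is a welcome (if minor) elaboration of a step the paper leaves implicit.
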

\begin{proof}
	Let $\phi \in C^2(\overline{\Omega})$ with $\grad \phi \cdot \nu = 0$ on $\partial \Omega$ and $\delta > 0$. Then due to the fundamental theorem of calculus and \Cref{lemma:u_time_diff_integrability}, there exists a time $t_0 > 0$ such that
	\[
		\left|\int_\Omega u_{\eps}(\cdot, t) \varphi - \int_\Omega u_\eps(\cdot, 0)\varphi \right| = \left| \int_0^t\int_\Omega u_{\eps t} \phi \right| \leq \frac{\delta}{3}
	\]	
	for all $\eps \in (0,1)$ and $t \in (0,t_0)$. Further by (\ref{approx_initial_data}), there exists $\eps' \in (0,1)$ such that
	\[
		\left|\int_\Omega u_{\eps'}(\cdot, 0) \varphi - \int_{\overline{\Omega}} \varphi \d \mu \right| = \left|\int_\Omega u_{0,{\eps'}} \varphi - \int_{\overline{\Omega}} \varphi \d \mu \right| \leq  \frac{\delta}{3}
	\] 
	for all $\eps \in (0,\eps')$. Due to the convergence property (\ref{eq:u_hoelder_approx}), there moreover exists $\eps(t) \in (0, \eps')$ for each $t \in (0,t_0)$ such that
	\[
		\left|\int_\Omega u(\cdot, t) \varphi - \int_\Omega u_{\eps(t)}(\cdot, t) \phi \right| \leq  \frac{\delta}{3}.
	\]
	This then implies that
	\begin{align*}
		&\left| \int_\Omega u(\cdot, t)\varphi - \int_{\overline{\Omega}} \varphi \d \mu \right| \\
		\leq& \left|\int_\Omega u(\cdot, t) \phi - \int_\Omega u_{\eps(t)}(\cdot, t) \varphi  \right| + \left|\int_\Omega u_{\eps(t)}(\cdot, t) \varphi - \int_\Omega u_{\eps(t)}(\cdot, 0)\varphi \right|  + \left|\int_\Omega u_{\eps(t)}(\cdot, 0) \varphi - \int_{\overline{\Omega}} \varphi \d \mu \right| \leq \delta
	\end{align*}
	for all $t \in (0,t_0)$.
	Due to the density of functions from $C^2(\overline{\Omega})$ with Neumann boundary conditions in $C(\overline{\Omega})$ because of the sufficiently smooth boundary of $\Omega$, this already completes the proof.
\end{proof}

\section{Proof of \Cref{theorem:main}}
Combining the final result of the two previous sections now immediately gives us the central result of this paper:
\begin{proof}[Proof of \Cref{theorem:main}]
	Let $(u,v)$ be as in \Cref{lemma:construction}. Then \Cref{lemma:uv_are_actually_solutions} and \Cref{lemma:initial_data_continuity} imply all the desired solution properties of $(u,v)$ when applied in combination.
\end{proof}

\section*{Acknowledgment} The author acknowledges the support of the \emph{Deutsche Forschungsgemeinschaft} within the scope of the project \emph{Emergence of structures and advantages in cross-diffusion systems}, project number 411007140.

\end{document}